\documentclass[12pt]{amsart}
\usepackage[a4paper,margin=2.5cm]{geometry}
\usepackage{amsthm,amsmath,amssymb,amsfonts}

\usepackage{mathtools}
\usepackage{latexsym}
\usepackage{bm}
\usepackage[all]{xy}

\usepackage{hyperref}

\usepackage{graphicx}
\usepackage{xcolor}
\usepackage[percent]{overpic}

\usepackage{libertinus}
\sloppy

\begin{document}
\theoremstyle{plain}
\newtheorem{thm}{Theorem}[section]
\newtheorem{prop}[thm]{Proposition}
\newtheorem{lem}[thm]{Lemma}
\newtheorem{cor}[thm]{Corollary}

\theoremstyle{definition}
\newtheorem{defi}[thm]{Definition}
\newtheorem{rmk}[thm]{Remark}
\newtheorem{exm}[thm]{Example}
\newtheorem{que}[thm]{Question}
\newtheorem{prob}[thm]{Ploblem}

\title{On the Ozsv\'ath-Szab\'o $d$-invariants for almost simple linear graphs}    
\author{Tatsumasa Suzuki}
\thanks{The author is partially supported by JST SPRING, Grant Number JPMJSP2106}
\subjclass{57K30, 57K31, 57R58}
\keywords{homology $3$-sphere, the Ozsv\'ath-Szab\'o $d$-invariant, almost simple linear graphs}

\address{Department of Frontier Media Science, Meiji University, 4-21-1 Nakano, Nakano-ku, Tokyo 164-8525, Japan}
\email{suzukit519@meiji.ac.jp}

\date{\today}

\begin{abstract}  
Karakurt and \c{S}avk computed the Ozsv\'ath-Szab\'o $d$-invariants of Brieskorn homology $3$-spheres arising as surgeries on almost simple linear graphs. 
In this paper, we refine their formula for these $d$-invariants. 
Furthermore, we present infinite families of examples for which the inequalities appearing in this refinement are equalities, as well as infinite families for which they are strict.
As an application, we derive consequences for the knot concordance group. 
\end{abstract}

\maketitle
\tableofcontents

\section{Introduction}
The $n$-dimensional homology cobordism group $\Theta_{\mathbb{Z}}^n$ was introduced by Gonz\'alez-Acu\~na~\cite{GAn70}, building on earlier work of Kervaire and Milnor~\cite{KM63} on the $n$-dimensional homotopy cobordism group $\Theta^n$. 

Matumoto~\cite{Mat78}, and Galewski and Stern~\cite{GS80} related the triangulation conjecture to the Rokhlin invariant and the $3$-dimensional homology cobordism group $\Theta_{\mathbb{Z}}^3$, reducing the conjecture to a problem concerning the interaction between $3$- and $4$-manifolds. 
Later, Manolescu~\cite{Man16} disproved the triangulation conjecture using $\mathrm{Pin}(2)$-equivariant Seiberg-Witten Floer homology, further highlighting the importance of $\Theta_{\mathbb{Z}}^3$ in low-dimensional topology; see also~\cite{Man18}. 

Rokhlin~\cite{Rok52} proved that the $3$-dimensional homology cobordism group $\Theta_{\mathbb{Z}}^3$ is nontrivial.
Fintushel and Stern~\cite{FS85} later showed that $\Theta_{\mathbb{Z}}^3$ contains a $\mathbb{Z}$-subgroup, and Furuta~\cite{Fur90} proved that it contains a $\mathbb{Z}^{\infty}$-subgroup. 

On the other hand, Fr{\o}yshov~\cite{Fro02} showed that $\Theta_{\mathbb{Z}}^3$ admits a $\mathbb{Z}$-summand. 
Later, Dai, Hom, Stoffregen, and Truong~\cite{DHST23} constructed a $\mathbb{Z}^{\infty}$-summand generated by a family of Brieskorn homology $3$-spheres with almost simple linear graphs using the Ozsv\'ath-Szab\'o $d$-invariant~\cite{OS03}. 
Moreover, Karakurt and \c{S}avk~\cite{KS22} exhibited another $\mathbb{Z}^{\infty}$-summand generated by two distinct families of Brieskorn homology $3$-spheres with almost simple linear graphs, based on computations of the $d$-invariant carried out in~\cite{KS20}. 

The group $\Theta_{\mathbb{Z}}^3$ has also been studied from the viewpoints of both low-dimensional topology and knot theory through its interaction with the knot concordance group $\mathcal{C}$.
For example, by adapting gauge-theoretic methods introduced in~\cite{Fur90}, Endo~\cite{End95} proved that the subgroup $\mathcal{C}_{\mathrm{TS}} $ of $\mathcal{C}$ generated by topologically slice knots contains a $\mathbb{Z}^{\infty}$-subgroup. 
On the other hand, Dai, Hom, Stoffregen, and Truong~\cite{DHST23} established the existence of a $\mathbb{Z}^{\infty}$-summand in $\Theta_{\mathbb{Z}}^3$, following ideas related to Hom's method that $\mathcal{C}_{\mathrm{TS}}$ admits a $\mathbb{Z}^{\infty}$-summand~\cite{Hom15}.

Despite the development of Floer-theoretic invariants, the structure of the homology cobordism group $\Theta_{\mathbb{Z}}^3$ remains largely unknown. 
A central difficulty is the lack of explicit computations for large natural families of homology $3$-spheres. 

The Ozsv\'ath-Szab\'o $d$-invariant $d(Y,\mathfrak{s})$ is a rational homology $\text{spin}^c$ cobordism invariant assigning a rational number to a rational homology $3$-sphere $Y$ equipped with a $\text{spin}^c$ structure $\mathfrak{s}$; see~\cite{OS03} for details.
Since a homology $3$-sphere admits a unique $\text{spin}^c$ structure, we suppress it from the notation.
In particular, if $Y$ is a homology $3$-sphere, then $d(Y)$ is an even integer and is invariant under homology cobordism.

Let $p$, $q$, and $r$ are pairwise relatively prime positive integers.
The Brieskorn homology $3$-sphere $\Sigma(p,q,r)$ is defined by
\[
\Sigma(p,q,r)
=
\{(x,y,z)\in\mathbb{C}^3 \mid x^p+y^q+z^r=0\}
\cap S_{\varepsilon}^5,
\]
where $S_{\varepsilon}^5$ denotes the $5$-sphere of sufficiently small radius $\varepsilon>0$.
The homology sphere $\Sigma(p,q,r)$ is unchanged up to diffeomorphism under permutations of $p$, $q$, and $r$, and is diffeomorphic to $S^3$ whenever one of $p$, $q$, or $r$ is equal to $1$. 

We further assume that $1<p<q<r$ are integers satisfying
\[
pq+pr-qr=1.
\]

A surgery diagram of the closed $3$-manifold $\Sigma(p,q,r)$ is given by the plumbed $3$-manifold shown at the left of Figure~\ref{fig(ASL-graph)}, which corresponds to the weighted graph displayed at the right of Figure~\ref{fig(ASL-graph)}.

\begin{figure}[htbp]

\vspace{5truemm}

    \centering
    \begin{overpic}[scale=0.5
    ]{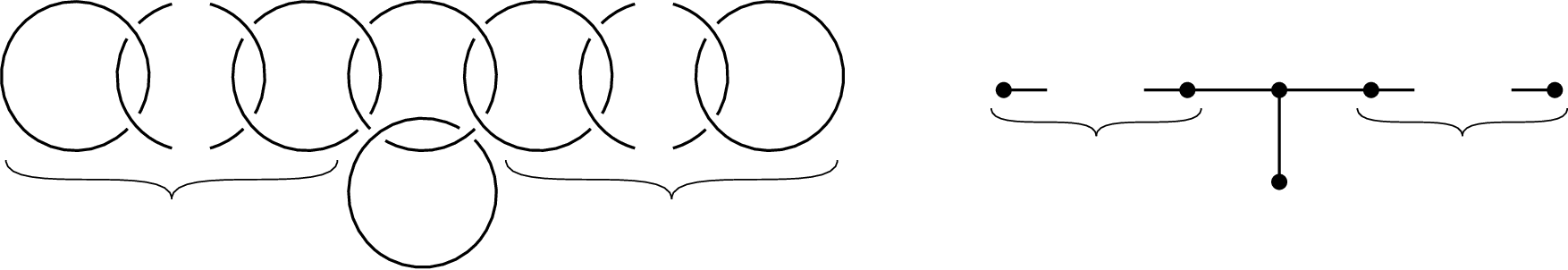}
    \put(2,18){$-2$}
    \put(10.5,11.5){$\cdots$}
    \put(18,18){$-2$}
    \put(25,18){$-2$}
    \put(33,18){$-2$}
    \put(40,11.5){$\cdots$}
    \put(48,18){$-2$}

    \put(8,1.5){$q-1$}
    \put(40,1.5){$r-1$}
    \put(21,-2){$-p$}

    \put(57,11){$=$}

    \put(62,13){$-2$}
    \put(68,10.5){$\cdots$}
    \put(73,13){$-2$}
    \put(80,13){$-2$}
    \put(86,13){$-2$}
    \put(91.5,10.5){$\cdots$}
    \put(97,13){$-2$}

    \put(67,6){$q-1$}
    \put(90.5,6){$r-1$}
    \put(80,2.5){$-p$}
    \end{overpic}

\vspace{5truemm}

\caption{A surgery diagram of $\Sigma(p,q,r)$ with $pq+pr-qr=1$}
\label{fig(ASL-graph)}
\end{figure}

We call this weighted graph an \textit{almost simple linear graph}. 
The most classical example among such Brieskorn homology $3$-spheres is the Poincar\'e homology $3$-sphere $\Sigma(2,3,5)$, whose almost simple linear graph coincides with the $E_8$ plumbing graph. 

Karakurt and \c{S}avk~\cite{KS20} studied the Ozsv\'ath-Szab\'o $d$-invariant $d(\Sigma(p,q,r))$ in order to investigate the behavior of these manifolds in the homology cobordism group $\Theta_{\mathbb{Z}}^3$. 
They computed the invariant explicitly in the case where $p$ is even. 

We fix the orientation of $\Sigma(p,q,r)$ so that $d(\Sigma(p,q,r))\ge0$. 

\begin{prop}[Karakurt-\c{S}avk~{\cite[Proposition~4.5]{KS20}}]
\label{prop(even-case)}
Suppose that $p$ is even and $pq+pr-qr=1$. 
Then
\[
d(\Sigma(p,q,r))=\frac{q+r}{4}=\frac{r^2-1}{4(r-p)}=\frac{q^2-1}{4(q-p)}. 
\]
\end{prop}

They also derived a formula for the case where $p$ is odd; see~\cite{KS22} or Section~\ref{sec(Preliminaries)}. 
However, explicit closed formulas for $d(\Sigma(p,q,r))$ remain unknown when $p$ is odd, even for families defined by almost simple linear graphs. 

We now give a more explicit description in the case where $p$ is odd. 
Set $n_p=(p-1)/2$. 
Let $t_{p,q}$ and $\alpha_{p,q}$ denote the quotient and remainder obtained by dividing $n_p$ by $q-p$, that is,
\[
n_p = (q-p)t_{p,q}+\alpha_{p,q},
\qquad
0\le \alpha_{p,q} < q-p .
\]

Define the quadratic function
\[
F_{p,q}(x,y)
:=
\frac{-(q+r)x^2+4qxy-4(q-p)y^2-4y+q+r}{4}.
\]

Define the set
\[
\mathfrak{M}_{p,q}
:=
\{(1,t_{p,q}+1)\}
\sqcup
\{(a,m)\in\mathbb{Z}^2
\mid
a\in2\mathbb{N}+1,\;
a<m\le n_p,\;
F_{p,q}(a,m)\ge F_{p,q}(1,1)
\},
\]
where $\mathbb{N}$ denotes the set of positive integers.

\begin{thm}
\label{thm(odd-case)}
Suppose that $p$ is odd and $pq+pr-qr=1$.
Then
\begin{align*}
d(\Sigma(p,q,r))
&=
\max_{(a,m)\in\mathfrak{M}_{p,q}} F_{p,q}(a,m)\\
&\ge
\max_{1\le m\le n_p} F_{p,q}(1,m)
=
F_{p,q}(1,t_{p,q}+1)
=
(t_{p,q}+1)(n_p+\alpha_{p,q}).
\end{align*}
\end{thm}

\begin{rmk}
If $t_{p,q}$ is odd, then $t_{p,q}+1$ is even. 
If $t_{p,q}$ is even, then $n_p+\alpha_{p,q}=t_{p,q}(q-p)+2\alpha_{p,q}$ is even. 
Hence $(t_{p,q}+1)(n_p+\alpha_{p,q})$ is always an even integer. 
\end{rmk}

Using Theorem~\ref{thm(odd-case)}, we obtain several new cases.

\begin{thm}
\label{thm(alpha0case)}
Suppose that $p$ is odd, $pq+pr-qr=1$, and $\alpha_{p,q} = 0$, then
\[
d(\Sigma(p,q,r)) = (t_{p,q}+1)n_p.
\]
\end{thm}

\begin{thm}
\label{thm(1-19)}
Suppose that $p$ is odd, $pq+pr-qr=1$, and $1 \le q-p \le 19$. 
Then
\[
d(\Sigma(p,q,r))=D(p,q,r).
\]
\end{thm}

Define
\[
D(p,q,r) :=
\begin{cases}
(t_{p,q}+1)(n_p+\alpha_{p,q}), & \text{if $p$ is odd}, \\[2mm]
d(\Sigma(p,q,r)), & \text{if $p$ is even}.
\end{cases}
\]

Let $(p_i,q_i,r_i)\in\mathbb{Z}^3$ satisfy $1<p_i<q_i<r_i$, and $p_iq_i+p_ir_i-q_ir_i=1  \quad (i=1,2)$.

The following inequality holds independently of the parity of $p$.

\begin{prop}
\label{prop(D-ineq)}
Let $p_i,q_i$, and $r_i$ be integers with $1<p_i<q_i<r_i$ satisfying 
\[
p_iq_i+p_ir_i-q_ir_i=1 \quad (i=1,2).
\]
Assume that $p_1=p_2=p$ and $q_1 \ge q_2$. 
Then
\[
2\left\lfloor\frac{p}{2}\right\rfloor
\le 
D(p_1,q_1,r_1)
\le 
D(p_2,q_2,r_2)
\le 
\left\lfloor\frac{p}{2}\right\rfloor^2
+
\left\lfloor\frac{p}{2}\right\rfloor.
\]
\end{prop}

Now assume that $p$ is odd.
Define $l_{p,q}:=q-p$ and $s_{p,q}:=l_{p,q}/n_p$. 

\begin{thm}
\label{thm(d=D=p-1-condition)}
Let $p$ be an odd integer. 
Suppose that $pq + pr - qr = 1$ and 
\[
s_{p,q}=\frac{2u}{u+1}
\]
for some positive integer $u$. 
Then $d(\Sigma(p,q,r))=p-1$. 
\end{thm}
We also exhibit an infinite family for which $d(\Sigma(p,q,r)) = D(p,q,r)$ (See Section~\ref{sec(infinite-kinds-of-infinite-classes)}). 

In this paper, we present a sufficient condition for the inequality
\[
d(\Sigma(p,q,r)) \neq D(p,q,r)
\]
to hold.

\begin{thm}
\label{thm(D-neq-d-condition)}
Let $p$ be a sufficiently large odd integer satisfying $pq + pr - qr = 1$. 
Assume that $s_{p,q} \in (1,2)$ and 
\[
s_{p,q} \neq \frac{2u}{u+1}
\]
for any positive integer $u$. 
Then $d(\Sigma(p,q,r)) > D(p,q,r) = p - 1$. 
\end{thm}

We also exhibit an infinite family for which $d(\Sigma(p,q,r)) \neq D(p,q,r)$ (see Section~\ref{sec(d-neq-D)}). 

Let $F_m$ denote the $m$-th Fibonacci number. 
The $d$-invariant of the Brieskorn homology $3$-sphere $\Sigma(F_{2k+1},F_{2k+2},F_{2k+3})$ with $F_{2k+1}$ odd is the most complicated example considered in~\cite{KS20}, and it is the only example for which no explicit formula is known. 
We obtain the following nontrivial estimate. 

\begin{prop}
\label{prop(Fibonacci-case)}
If $k>4$ and $2k+1\notin 3\mathbb Z$, then
\[
d(\Sigma(F_{2k+1},F_{2k+2},F_{2k+3}))
\ge
F_{F_{2k+1},F_{2k+2}}(3,4)
>
D(F_{2k+1},F_{2k+2},F_{2k+3}).
\]
\end{prop}

Here we summarize the results of \cite{KS20} together with those obtained in this paper for the $d$-invariant $d(\Sigma(p,q,r))$ in the case where $p$ is odd and $pq+pr-qr=1$ in Figure~\ref{fig(summary)}. 
Note that the integer $l_{p,q}=q-p$ satisfies $1\le l_{p,q}\le p-1$. 

\begin{figure}[htbp]

\vspace{5truemm}

\centering
\begin{overpic}[scale=0.5]{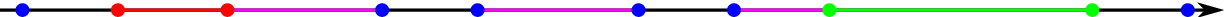}
\put(0.5,-4){\color{blue}{$1$}}
\put(8.5,-4){\color{red}{$2$}}
\put(17,-4){\color{red}{$19$}}

\put(23,4){\color{blue}{$(p+1)/4$}}
\put(33,-4){\color{blue}{$(p+3)/4$}}

\put(44,4){\color{blue}{$(p-1)/2$}}
\put(54,-4){\color{blue}{$(p+1)/2$}}

\put(64,4){\color{green}{$(p+3)/2$}}

\put(84,-4){\color{green}{$p-2$}}

\put(92,4){\color{blue}{$p-1$}}

\put(102,0){$l_{p,q}$}
\end{overpic}

\vspace{5truemm}

\caption{
Summary of results on the $d$-invariant $d(\Sigma(p,q,r))$ for odd $p$ satisfying $pq+pr-qr=1$
}
\label{fig(summary)}
\end{figure}
In Figure~\ref{fig(summary)}, the blue region indicates the range where $d(\Sigma(p,q,r))$ can be computed using the results of \cite{KS20}, while the red region corresponds to the range where $d(\Sigma(p,q,r))$ can be computed using Proposition~\ref{thm(1-19)}. 
The green region represents the range where there exist examples satisfying $d(\Sigma(p,q,r))\neq D(p,q,r)$. 
The green and pink regions remain largely unexplored, except for the case $\alpha_{p,q}=0$ (Theorem~\ref{thm(alpha0case)}). 
The figure illustrates that, in addition to Corollary~\ref{cor(Fibonacci-cases)}, the present study produces new pairs of Brieskorn homology $3$-spheres that are not homology cobordant. 

These results reveal a previously unseen transition between computable and irregular regimes of the $d$-invariant.

Levine and Lidman \cite{LL19} have used the calculation of the $d$-invariant $d(\Sigma(2,q,r))$ to detect a simply connected spineless $4$-manifold $X$ (i.e., the sphere $S^2$ embedded in $X$ in all piecewise linear $X$ is not homotopy equivalent to $X$). 
It is unknown whether $\Sigma(p,q,r)$ is diffeomorphic to a Dehn surgery $S^3_r(K) \quad (r\in\mathbb{Q}\cup\{\infty\})$ along a knot $K$ in $S^3$. 
To the best of the author's knowledge, there are almost no explicit computations of $d(\Sigma(p,q,r))$ for $p>5$, and no examples are known in which $\Sigma(p,q,r)$ is realized as a Dehn surgery along a knot in $S^3$, except for the results in~\cite{KS20}. 
Further development of the computation of $d(\Sigma(p,q,r))$ for almost simple linear graphs is expected to contribute to applications in the study of $4$-manifolds. 

Iida and the first author~\cite{IS25} showed that several infinite families of $P(p,q,r)$ are not squeezed by applying $\mathbb{Z}_2$-equivariant monopole Floer theory and Heegaard Floer theory, using the fact that the double branched covers $\Sigma_2(P(p,q,r))$ admit almost simple linear graphs. 
It is expected that the computational method for the $d$-invariant developed in this paper can also be applied to such rational homology $3$-spheres $\Sigma_2(P(p,q,r))$, potentially leading to further contributions to knot theory. 

\section*{Organization}
In Section~\ref{sec(Preliminaries)}, we review Brieskorn homology $3$-spheres (Subsection~\ref{subsec(BrieskornS3)}) and the Ozsv\'ath-Szab\'o $d$-invariant (Subsection~\ref{subsec(d-inv)}). 
We also introduce several related results concerning the homology cobordism group (Subsection~\ref{subsec(homology-cobordism-group)}) and the knot concordance group (Subsection~\ref{subsec(knot-concordance-group)}). 
In Section~\ref{sec(refinement)}, we prove the main result, which can be regarded as a refinement of \cite[Theorem~1.1]{KS20}, and present several methods for computing $d(\Sigma(p,q,r))$ in the case where $p$ is odd and $pq+pr-qr=1$ (Subsection~\ref{subsec(Some-properties)}). 
In Section~\ref{sec(A-parity-independent-inequality)}, we establish the inequality in Proposition~\ref{prop(D-ineq)} relating the Ozsv\'ath-Szab\'o $d$-invariants for certain classes of Brieskorn homology $3$-spheres with almost simple linear graphs. 
In Section~\ref{sec(infinite-kinds-of-infinite-classes)}, we present explicit computations of $d(\Sigma(p,q,r))$ for infinitely many families. 
In Section~\ref{sec(d-neq-D)}, we show that there exist infinitely many families of $\Sigma(p,q,r)$ that do not satisfy the inequality established in Section~\ref{sec(A-parity-independent-inequality)}. 
In Section~\ref{sec(Fibonacci-cases)}, we investigate the Fibonacci cases $d(\Sigma(F_{2k+1},F_{2k+2},F_{2k+3}))$. 
Finally, in Section~\ref{sec(application-knot-conc)}, we present applications of the results of Sections~\ref{sec(infinite-kinds-of-infinite-classes)} and~\ref{sec(d-neq-D)} to the knot concordance group. 

In this paper, unless otherwise specified, we assume that all manifolds are smooth, connected, and oriented, and that all maps are smooth. 

\section*{Acknowledgements}
I would like to thank my adviser when I was a graduate student, Hisaaki Endo, for encouraging me to write this paper. 
I would also like to express my sincere gratitude to Motoo Tange for contributing to his knowledge of Brieskorn homology spheres and the $d$-invariants. 
I would like to thank Naoki Kuroda for teaching me the contents of Remark \ref{rmk(Kuroda)}. 
I also would like to thank Kazuhiro Ichihara, Nobuo Iida, Hokuto Konnno, Ryotaro Kosuge, Taketo Sano, and Akihiro Takano for helpful comments. 
I would like to thank O\u{g}uz \c{S}avk for his valuable advice on the background and direction of this paper, as well as for his helpful comments and corrections on a previous version of this paper. 

\section{Preliminaries}
\label{sec(Preliminaries)}
\subsection[Brieskorn homology $3$-spheres]{Brieskorn homology  \texorpdfstring{$3$}{3}-spheres}
\label{subsec(BrieskornS3)}
Let $p, q$, and $r$ be integers with $1 \le p < q < r$ such that 
$\gcd(p,q)=\gcd(q,r)=\gcd(r,p)=1$. 
The symbol $S_{\varepsilon}^{5}$ denotes the $5$-sphere of radius $\varepsilon$ centered at the origin, where $0<\varepsilon\ll1$. 

\begin{defi}[Brieskorn homology $3$-sphere]
The topological $3$-manifold
\[
\Sigma(p,q,r)
:=
\{(x,y,z)\in \mathbb{C}^{3}\mid x^{p}+y^{q}+z^{r}=0\}
\cap S_{\varepsilon}^{5}
\]
is called the \textit{Brieskorn homology $3$-sphere}.
\end{defi}

\begin{rmk}
Let $p_i,q_i$, and $r_i$ be integers with $1\le p_i<q_i<r_i$ such that $\gcd(p_i,q_i)=\gcd(q_i,r_i)=\gcd(r_i,p_i)=1 \quad(i=1,2)$. 
If $(p_1,q_1,r_1) \neq (p_2,q_2,r_2)$, then 
$\pi_1(\Sigma(p_1,q_1,r_1))$ is not isomorphic to 
$\pi_1(\Sigma(p_2,q_2,r_2))$. 
Hence $\Sigma(p_1,q_1,r_1)$ is not homotopy equivalent to $\Sigma(p_2,q_2,r_2)$. 
\end{rmk}

\begin{rmk}
Fix $\delta$ satisfying $0<\delta\ll\varepsilon\ll1$. 
The symbol $D_{\varepsilon}^{6}$ denotes the $6$-ball of radius $\varepsilon$ centered at the origin. 
The Milnor fiber of the Brieskorn singularity $x^{p}+y^{q}+z^{r}=0$ is defined by
\[
M(p,q,r)
:=
\{(x,y,z)\in\mathbb{C}^{3}\mid x^{p}+y^{q}+z^{r}=\delta\}
\cap D_{\varepsilon}^{6}.
\]
This is a smooth $4$-manifold whose boundary is naturally identified with the Brieskorn homology $3$-sphere $\Sigma(p,q,r)$. 
For this reason, we regard $\Sigma(p,q,r)$ as a smooth manifold.
\end{rmk}

\begin{rmk}
Since $\Sigma(p,q,r)$ is a homology $3$-sphere, there exist integers 
$e_0,p',q',r'$ satisfying the Diophantine equation
\[
e_0pqr+p'qr+pq'r+pqr'=-1,
\qquad
0<p'<p,\;
0<q'<q,\;
0<r'<r .
\]

Let $p_1,p_2$, and $p_3$ denote $p,q$, and $r$, and $p'_1,p'_2$, and $p'_3$ denote $p',q'$, and $r'$, respectively. 
Then each rational number $-p_i/p'_i$ admits a unique continued fraction expansion of the form 
\[
-\frac{p_i}{p'_i}
=
t_{i1}-\cfrac{1}{t_{i2}-\cfrac{1}{\ddots-\cfrac{1}{t_{im_i}}}},
\qquad
t_{im_i}\neq -1,
\quad (i=1,2,3).
\]

Let $t$ be an integer and let $s$ be a rational number.
There is a move on surgery diagrams called the {\it slam-dunk} move, which preserves the diffeomorphism type of the resulting closed
$3$-manifold (see Figure~\ref{fig(slam-dunk)}). 

\begin{figure}[htbp]
    \centering
\begin{overpic}[scale=0.5
]{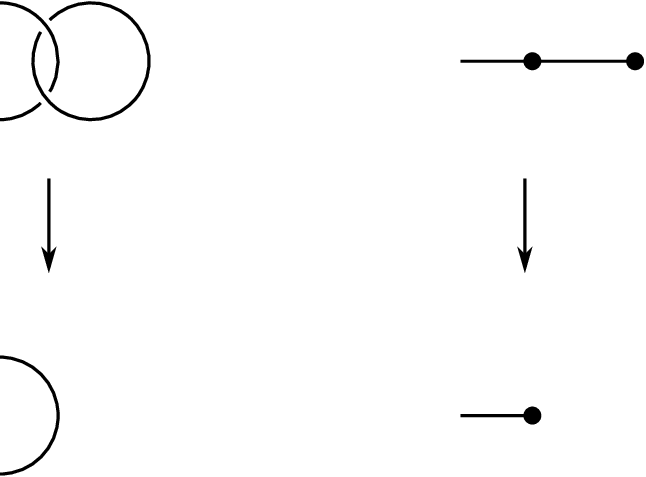}
    \put(3,75){$t$}
    \put(15,75){$s$}
    \put(-10,71.5){$\cdots$}
    \put(-10,53){$\cdots$}
    \put(30,36){slam-dunk}
    \put(11,10){$t-\displaystyle\frac{1}{s}$}
    \put(-10,16.5){$\cdots$}
    \put(-10,-1.5){$\cdots$}

    \put(45,62){$=$}
    \put(45,8){$=$}

    \put(82,68){$t$}
    \put(98,68){$s$}
    \put(60,62.25){$\cdots$}
    \put(87,9){$t-\displaystyle\frac{1}{s}$}
    \put(60,7.5){$\cdots$}
\end{overpic}
\caption{Slam-dunk}
\label{fig(slam-dunk)}
\end{figure}

Applying this move repeatedly, the surgery diagram of $\Sigma(p,q,r)$
can be converted into the plumbing graph shown in
Figure~\ref{fig(BrieskornS3)}. 

\begin{figure}[htbp]
    \centering
\begin{overpic}[scale=0.5]{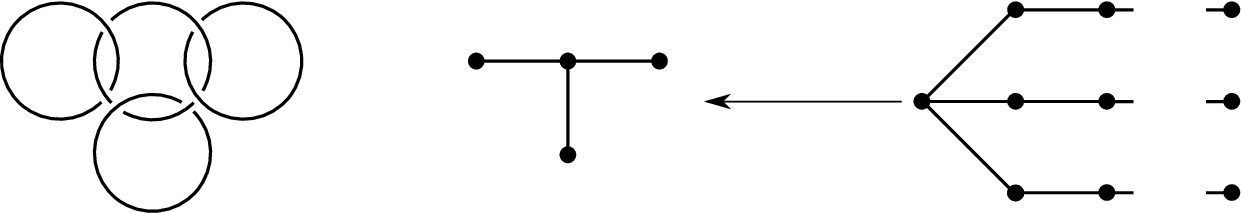}
    \put(11,18){$e_0$}
    \put(0.5,1){$-\displaystyle\frac{p}{p'}$}
    \put(-7,14){$-\displaystyle\frac{q}{q'}$}
    \put(25,14){$-\displaystyle\frac{r}{r'}$}

    \put(33,8){$=$}

    \put(44.5,14){$e_0$}
    \put(38,3){$-\displaystyle\frac{p}{p'}$}
    \put(35,16.5){$-\displaystyle\frac{q}{q'}$}
    \put(50,16.5){$-\displaystyle\frac{r}{r'}$}

    \put(57.5,11){slam-dunk}

    \put(72.5,5){$e_0$}
    \put(82,3.5){$t_{31}$}
    \put(82,10.75){$t_{21}$}
    \put(82,18){$t_{11}$}
    \put(89.5,3.5){$t_{32}$}
    \put(89.5,10.75){$t_{22}$}
    \put(89.5,18){$t_{12}$}
    \put(92.5,0.75){$\cdots$}
    \put(92.5,8.1){$\cdots$}
    \put(92.5,15.5){$\cdots$}
    \put(99,3.5){$t_{3m_{3}}$}
    \put(99,10.75){$t_{2m_{2}}$}
    \put(99,18){$t_{1m_{1}}$}
\end{overpic}
\caption{A surgery diagram of $\Sigma(p,q,r)$}
\label{fig(BrieskornS3)}
\end{figure}

In Figure~\ref{fig(BrieskornS3)}, the right side of the diagram corresponds to a surgery diagram for a $3$-manifold obtained by performing resolution of $\Sigma(p,q,r)$, which resolves singularities in $\Sigma(p,q,r)$ separately from the method using Milnor fiber.

Note that $\Sigma(1,q,r)=S^{3}$ and $\Sigma(2,3,5)$ is diffeomorphic to the Poincar\'e homology sphere. 
The Poincar\'e homology sphere is the first known example of a homology $3$-sphere that is not homeomorphic to the $3$-sphere $S^{3}$. 
\end{rmk}

\subsection[The $d$-invariants of Brieskorn homology $3$-spheres]{The \texorpdfstring{$d$}{d}-invariants of Brieskorn homology \texorpdfstring{$3$}{3}-spheres}
\label{subsec(d-inv)}

In this subsection we review some properties of the Ozsv\'ath-Szab\'o $d$-invariant and several previous results that will be used later. 

By \cite[Proposition 4.2]{OS03}, the $d$-invariant satisfies $d(-Y,\mathfrak{s})=-d(Y,\mathfrak{s})$. 
Throughout this paper we choose the orientation of $\Sigma(p,q,r)$ so that $d(\Sigma(p,q,r))\ge0$. 

If $Y$ is an integral homology $3$-sphere, then it admits a unique
$\text{spin}^c$ structure. Hence we simply write $d(Y)$ instead of
$d(Y,\mathfrak{s})$. Note that $d(Y)$ is an even integer.

If $\Sigma(p,q,r)$ is obtained by Dehn surgery on $S^3$ along a knot,
then its $d$-invariant can sometimes be computed systematically as follows.

Let
\[
g(p,q):=\frac{(p-1)(q-1)}{2}
\quad\text{and}\quad
S(p,q):=\{ap+bq \mid (a,b)\in\mathbb{Z}_{\ge0}^{2}\}.
\]

Let $T_{p,q}$ denote the right-handed $(p,q)$-torus knot, and $S^3_r(K)$ denote the $3$-manifold obtained from $S^3$ by Dehn surgery
along a knot $K$ with slope $r\in\mathbb{Q}\cup\{\infty\}$.
Then
\[
S^3_{-1/n}(-T_{p,q})=\Sigma(p,q,pqn-1)
\quad\text{and}\quad
S^3_{-1/n}(T_{p,q})=\Sigma(p,q,pqn+1)
\]
(see \cite[Example 1.2]{Sav02}).

\begin{thm}[Tweedy~\cite{Twe13}]
The following hold:
\[
d(\Sigma(p,q,pqn-1))
=
2|\{s\notin S(p,q)\mid s\ge g(p,q)\}|
\quad\text{and}\quad
d(\Sigma(p,q,pqn+1))=0 .
\]
\end{thm}

Next we review results from \cite{KS20}, which will be used in
Section~\ref{sec(refinement)}. For details we refer the reader to
\cite{KS20}.

We consider the Brieskorn homology $3$-sphere $\Sigma(p,q,r)$ satisfying 
\[
pq+pr-qr=1.
\]

If $pq+pr-qr=1$, then a surgery diagram of $\Sigma(p,q,r)$ is depicted in Figure~\ref{BrieskornS3(pq+pr-qr=1)}.

\begin{figure}[htbp]
\vspace{5truemm}
\centering
\begin{overpic}[scale=0.5]{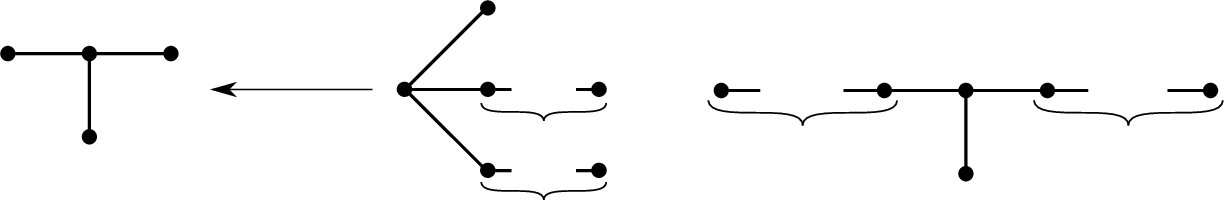}
    \put(5.5,13.5){$-2$}
    \put(0.5,2){$-\displaystyle\frac{p}{1}$}
    \put(-9,16){$-\displaystyle\frac{q}{q-1}$}
    \put(11,15.5){$-\displaystyle\frac{r}{r-1}$}

    \put(16,5){slam-dunk}

    \put(29,11){$-2$}
    \put(33.5,0){$-2$}
    \put(38,11){$-2$}
    \put(38,17){$-p$}

    \put(42,1.5){$\cdots$}
    \put(42,8){$\cdots$}

    \put(41,-3){$r-1$}
    \put(41,4){$q-1$}

    \put(50,1){$-2$}
    \put(47,11){$-2$}

    \put(53.5,8){$=$}

    \put(57,11){$-2$}
    \put(70,11){$-2$}
    \put(76,11){$-2$}
    \put(82,11){$-2$}
    \put(95,11){$-2$}

    \put(63.25,8){$\cdots$}
    \put(89.75,8){$\cdots$}

    \put(61.5,3){$q-1$}
    \put(88.5,3){$r-1$}

    \put(76,-1){$-p$}
\end{overpic}
\vspace{5truemm}
\caption{A surgery diagram of $\Sigma(p,q,r)$ with $pq+pr-qr=1$}
\label{BrieskornS3(pq+pr-qr=1)}
\end{figure}

Suppose that $p$ is odd. 
Then we can write $p=2n_p+1$ for some integer $n_p$. 

Define
\begin{align*}
\mathfrak{L}_{p}
&:=
\{(a,m)\in(2\mathbb{Z}+1)\times\mathbb{Z}
\mid
-p\le a\le p,\;
0\le m\le n_p\} \quad\text{and}\\
R_{p,q}
&:=
\{(a,m)\in\mathfrak{L}_{p}\mid
F_{p,q}(a,m)\ge F_{p,q}(1,1)\}.
\end{align*}

\begin{thm}[Karakurt-\c{S}avk~{\cite[Theorem~1.1]{KS20}}]
\label{thm([KS20]main)}
If $p$ is odd and $pq+pr-qr=1$, then
\[
d(\Sigma(p,q,r))
=
\max_{(a,m)\in R_{p,q}}F_{p,q}(a,m).
\]
\end{thm}

In this paper we refer to the formula in
Theorem~\ref{thm([KS20]main)} for computing the $d$-invariant as the {\it Karakurt-\c{S}avk formula}. 

Recall that $l_{p,q}=q-p$. 
Let $t_{p,q}$ and $\alpha_{p,q}$ denote the quotient
and remainder obtained when dividing $n_p$ by $l_{p,q}$, respectively. 

Recall that
\[
D(p,q,r) =
\begin{cases}
(t_{p,q}+1)(n_p+\alpha_{p,q}) & \text{if $p$ is odd},\\
d(\Sigma(p,q,r)) & \text{if $p$ is even}.
\end{cases}
\]

From the Karakurt-\c{S}avk formula we obtain the following results.

\begin{thm}[Karakurt-\c{S}avk~{\cite[Theorem~1.2]{KS20}}]
\label{thm(Example[KS20])}
The following hold.
\begin{itemize}
\item[(1)] $d(\Sigma(2n+1,4n+1,4n+3))=2n$ for any $n\ge1$.  \\
Equivalently, if $p$ is odd, $pq+pr-qr=1$, and $l_{p,q}=2n_p$, then $d(\Sigma(p,q,r))=D(p,q,r)$. \smallskip

\item[(2)] $d(\Sigma(2n+1,3n+2,6n+1))=2n$ for any $n\ge1$.  \\
Equivalently, if $p$ is odd, $pq+pr-qr=1$, and $l_{p,q}=n_p+1$, then $d(\Sigma(p,q,r))=D(p,q,r)$. \smallskip

\item[(3)] $d(\Sigma(2n+1,3n+1,6n+5))=2n$ for any $n\ge1$.  \\
Equivalently, if $p$ is odd, $pq+pr-qr=1$, and $l_{p,q}=n_p$, then $d(\Sigma(p,q,r))=D(p,q,r)$. \smallskip

\item[(4)] $d(\Sigma(4n+3,5n+4,20n+11))=6n+2$ for any $n\ge1$.  \\
Equivalently, if $p$ is odd, $pq+pr-qr=1$, and $l_{p,q}=n_p/2+1$, then $d(\Sigma(p,q,r))=D(p,q,r)$. \smallskip

\item[(5)] $d(\Sigma(2n+1,2n+2,4n^2+6n+1))=n^2+n$ for any $n\ge1$.  \\
Equivalently, if $p$ is odd, $pq+pr-qr=1$, and $l_{p,q}=1$, then
$d(\Sigma(p,q,r))=D(p,q,r)$. 
\end{itemize}
\end{thm}

We also define $\Delta_{p,q}(m)
:=
4(2m-(q+r))^2-16(q+r)(p-1)$,
\[
\mathfrak{t}_{p,q}(m)
:=
\frac{\sqrt{\Delta_{p,q}(m)}}{2(q+r)},
\quad
\mathfrak{c}_{p,q}(m)
:=
\frac{2qm}{q+r},
\quad\text{and}\quad
\mathfrak{d}_{p,q}(m)
:=
\min_{a\in2\mathbb{Z}+1}
|\mathfrak{c}_{p,q}(m)-a|.
\]

There is a necessary and sufficient condition for a pair $(a,m)$
to lie in $R_{p,q}$.

\begin{prop}[Karakurt-\c{S}avk~{\cite[Proposition 4.8]{KS20}}]
\label{prop(EquivCond)}
A pair $(a,m)$ belongs to $R_{p,q}$ if and only if either $(a,m)=(1,1)$
or all of the following conditions hold:
\begin{enumerate}
\item $m\ge2$,
\item $\Delta_{p,q}(m)\ge0$,
\item $\mathfrak{d}_{p,q}(m)\le\mathfrak{t}_{p,q}(m)$,
\item $|\mathfrak{c}_{p,q}(m)-a|=\mathfrak{d}_{p,q}(m)$.
\end{enumerate}
\end{prop}

\subsection{The homology cobordism group}
\label{subsec(homology-cobordism-group)}

In this subsection we review several results for the $3$-dimensional homology cobordism group $\Theta_{\mathbb{Z}}^3$ that will be related to later sections. 
For further details we refer the reader to the survey~\cite{Sav24}.

First we recall the notions of $h$-cobordism and the homotopy cobordism group.

\begin{defi}[$h$-cobordism]
Let $M_0$ and $M_1$ be homotopy $n$-spheres. 
If there exists an $(n+1)$-manifold $W$ such that
\begin{itemize}
\item $\partial W = -M_0 \cup M_1$, and
\item the inclusions $M_0 \hookrightarrow W$ and $M_1 \hookrightarrow W$ are homotopy equivalences,
\end{itemize}
then $M_0$ and $M_1$ are said to be \textit{$h$-cobordant}. 
We write $M_0 \sim M_1$ in this case.
\end{defi}

Note that $\sim$ is an equivalence relation on the set of homotopy $n$-spheres. 
The symbol $[M]_{\sim}$ denotes the equivalence class of a homotopy $n$-sphere $M$ under $\sim$.

\begin{defi}[The $n$-dimensional homotopy cobordism group]
Let $\Theta^n$ denote the set of $h$-cobordism classes of homotopy $n$-spheres. 
The connected sum operation induces an abelian group structure 
\[
[M_0]_{\sim} + [M_1]_{\sim} := [M_0 \# M_1]_{\sim}.
\]
The identity element is the class $[S^n]_{\sim}$ of the standard $n$-sphere $S^n$, 
and the inverse of $[M]_{\sim}$ is $[-M]_{\sim}$.  
The group $\Theta^n$ is called the \textit{$n$-dimensional homotopy cobordism group}.
\end{defi}

\begin{defi}[Homology cobordism]
Let $M_0$ and $M_1$ be homology $n$-spheres. 
If there exists an $(n+1)$-manifold $W$ such that
\begin{itemize}
\item $\partial W = -M_0 \cup M_1$, and
\item the inclusions $M_0 \hookrightarrow W$ and $M_1 \hookrightarrow W$ induce isomorphisms 
\[
H_k(M_0;\mathbb{Z}) \cong H_k(W;\mathbb{Z}) \cong H_k(M_1;\mathbb{Z})
\]
for all $k$,
\end{itemize}
then $M_0$ and $M_1$ are said to be \textit{homology cobordant}. 
We write $M_0 \sim_{\mathbb{Z}} M_1$ in this case. 
\end{defi}

Note that $\sim_{\mathbb{Z}}$ is an equivalence relation on the set of homology $n$-spheres. 
The symbol $[M]_{\sim_{\mathbb{Z}}}$ denotes the equivalence class of a homology $n$-sphere $M$ under $\sim_{\mathbb{Z}}$. 

\begin{rmk}
\label{rmk(notcobordant)}
The $d$-invariant is invariant under homology cobordism for homology $3$-spheres. 
Hence, if $Y_1$ and $Y_2$ are homology $3$-spheres and $d(Y_1) \neq d(Y_2)$, then $Y_1$ and $Y_2$ are not homology cobordant. 
\end{rmk}

Throughout this paper we assume that a homology $3$-sphere $Y$ is oriented so that $d(Y) \ge 0$. 

\begin{defi}[The $n$-dimensional homology cobordism group]
Let $\Theta_{\mathbb{Z}}^n$ denote the set of homology cobordism classes of homology $n$-spheres. 
The connected sum operation induces an abelian group structure
\[
[M_0]_{\sim_{\mathbb{Z}}} + [M_1]_{\sim_{\mathbb{Z}}}
:= [M_0 \# M_1]_{\sim_{\mathbb{Z}}}.
\]
The identity element is the class $[S^n]_{\sim_{\mathbb{Z}}}$  of the standard $n$-sphere $S^n$, 
and the inverse of $[M]_{\sim_{\mathbb{Z}}}$ is $[-M]_{\sim_{\mathbb{Z}}}$. 
The group $\Theta_{\mathbb{Z}}^n$ is called the \textit{$n$-dimensional homology cobordism group}.
\end{defi}

When $n \neq 3$, it is known that $\Theta^n$ is finite by \cite[Theorem 1.2]{KM63}, and that $\Theta^n$ is isomorphic to $\Theta_{\mathbb{Z}}^n$ by \cite[Theorem I.2]{GAn70}. 

The $3$-dimensional homotopy cobordism group $\Theta^3$ is trivial as a consequence of Perelman's resolution of the Poincaré conjecture \cite{Per02}, \cite{Per03a}, \cite{Per03b}. 

In contrast, the $3$-dimensional homology cobordism group $\Theta_{\mathbb{Z}}^3$ is known to be nontrivial.

\begin{thm}[Rokhlin~\cite{Rok52}]
There exists a surjective homomorphism
\[
\mu : \Theta_{\mathbb{Z}}^3 \rightarrow \mathbb{Z}_2 .
\]
\end{thm}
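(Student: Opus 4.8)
The plan is to realize $\mu$ as the \emph{Rokhlin invariant}. Since a homology $3$-sphere is parallelizable, hence spin, and $\Omega_3^{\mathrm{spin}}=0$, every oriented homology $3$-sphere $Y$ bounds a compact spin $4$-manifold $W$. Because $\partial W=Y$ is a homology sphere, the long exact sequence of the pair $(W,\partial W)$ together with Lefschetz duality show that the intersection form of $W$ is unimodular on the free part of $H_2(W;\mathbb{Z})$, while spinness forces it to be even; an even unimodular lattice has signature divisible by $8$, so I would define
\[
\mu(Y):=\frac{\sigma(W)}{8}\bmod 2\ \in\ \mathbb{Z}_2,
\]
where $\sigma$ denotes the signature.

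I would then verify in turn that $\mu$ is well defined, is a homomorphism, and is a homology cobordism invariant. For \emph{well-definedness}: if $W_0,W_1$ are two spin fillings of $Y$, then $W_0\cup_Y(-W_1)$ is a closed spin $4$-manifold, so its signature is divisible by $16$ by Rokhlin's theorem on closed spin $4$-manifolds; by Novikov additivity of the signature, $\sigma(W_0)-\sigma(W_1)=\sigma\bigl(W_0\cup_Y(-W_1)\bigr)\equiv 0\pmod{16}$, hence $\sigma(W_0)/8\equiv\sigma(W_1)/8\pmod 2$. For the \emph{homomorphism} property: a boundary connected sum $W_0\natural W_1$ of spin fillings of $Y_0,Y_1$ is a spin filling of $Y_0\#Y_1$ with additive signature, so $\mu(Y_0\#Y_1)=\mu(Y_0)+\mu(Y_1)$, while $\mu(-Y)=-\sigma(W)/8\equiv\sigma(W)/8=\mu(Y)\pmod 2$. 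For \emph{cobordism invariance}: a homology cobordism $X$ with $\partial X=-Y_0\cup Y_1$ has $H_1(X;\mathbb{Z})=H_2(X;\mathbb{Z})=0$, so $H^2(X;\mathbb{Z}/2)=0$ by universal coefficients, whence $w_2(X)=0$ and $X$ is spin; gluing a spin filling $W_0$ of $Y_0$ to $X$ along $Y_0$ (the two spin structures restrict to the unique spin structure of the homology sphere $Y_0$) gives a spin filling of $Y_1$, and since $\sigma(X)=0$ (because $H_2(X;\mathbb{Q})=0$), Novikov additivity yields $\mu(Y_1)=\sigma(W_0)/8=\mu(Y_0)$. Thus $\mu$ induces a homomorphism $\Theta_{\mathbb{Z}}^3\to\mathbb{Z}_2$, and it is surjective because the Poincar\'e sphere $\Sigma(2,3,5)$ bounds the simply connected $E_8$-plumbing $4$-manifold, whose even unimodular $E_8$-form makes it spin with signature $\pm 8$, so $\mu(\Sigma(2,3,5))=1$.

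The only substantive input is one I would cite rather than prove: \textbf{Rokhlin's theorem} that a closed smooth spin $4$-manifold has signature divisible by $16$ (see \cite{Rok52}). Granting this, together with $\Omega_3^{\mathrm{spin}}=0$ and Novikov additivity of the signature, every step above is a routine homological computation; the points needing slight care are the evenness and unimodularity of the intersection form of a spin filling of a homology sphere — needed so that $\sigma(W)/8$ is an integer and the definition is meaningful — and the observation that a homology cobordism is automatically spin, so that gluing it on does not change $\mu$.
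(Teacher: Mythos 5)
The paper states this result only as a citation to Rokhlin and gives no proof of its own, so there is nothing internal to compare against; your argument is the standard construction of the Rokhlin homomorphism $\mu(Y)=\sigma(W)/8 \bmod 2$ and it is correct and complete modulo the inputs you explicitly cite (Rokhlin's $16\mid\sigma$ theorem for closed spin $4$-manifolds, $\Omega_3^{\mathrm{spin}}=0$, and Novikov additivity). All the verifications — evenness and unimodularity of the form of a spin filling, well-definedness, additivity under connected sum, spinness of a homology cobordism, and surjectivity via the $E_8$-filling of $\Sigma(2,3,5)$ — are carried out correctly.
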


Later it was shown that $\Theta_{\mathbb{Z}}^3$ is infinite.

\begin{thm}[Fintushel-Stern~\cite{FS85}]
The group $\Theta_{\mathbb{Z}}^3$ contains the subgroup
\[
\mathbb{Z}[\Sigma(2,3,5)]_{\sim_{\mathbb{Z}}}
=
\mathbb{Z}[S^3_{-1}(-T_{2,3})]_{\sim_{\mathbb{Z}}}.
\]
\end{thm}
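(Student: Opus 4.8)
The plan is to derive the statement from the formal properties of the Ozsv\'ath--Szab\'o $d$-invariant recalled above, which is far shorter than the original instanton-theoretic argument of \cite{FS85}. The mechanism is that $d$ induces a group homomorphism $d\colon \Theta_{\mathbb{Z}}^3 \to 2\mathbb{Z}$: it is a homology cobordism invariant (Remark \ref{rmk:notcobordant}), so it is well defined on classes; it satisfies $d(-Y) = -d(Y)$ by \cite[Proposition 4.2]{OS03}; and it is additive under connected sum of homology spheres (a standard property of the $d$-invariant, from \cite{OS03}). Granting this, the whole statement reduces to evaluating $d$ on a single class.

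First I would record that the two generators named in the statement coincide as manifolds: putting $(p,q)=(2,3)$ and $n=1$ in the identity $S_{-1/n}^{3}(-T_{p,q}) = \Sigma(p,q,pqn-1)$ from Subsection \ref{subsec: d-inv.} gives $S_{-1}^{3}(-T_{2,3}) = \Sigma(2,3,5)$, so the equality $\mathbb{Z}[\Sigma(2,3,5)]_{\sim_{\mathbb{Z}}} = \mathbb{Z}[S_{-1}^{3}(-T_{2,3})]_{\sim_{\mathbb{Z}}}$ is immediate. Next I would check that $(2,3,5)$ satisfies the hypotheses of Proposition \ref{evencase}: one has $1<2<3<5$, the three integers are pairwise coprime, $p=2$ is even, and $pq+pr-qr = 6+10-15 = 1$. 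Hence Proposition \ref{evencase} applies and yields $d(\Sigma(2,3,5)) = \tfrac{q+r}{4} = \tfrac{3+5}{4} = 2$ (equivalently, the $d$-invariant of the boundary of the negative-definite $E_8$ plumbing equals $2$).

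Finally I would conclude. For every $n\in\mathbb{Z}$, additivity together with $d(-Y)=-d(Y)$ gives $d\bigl(n\,[\Sigma(2,3,5)]_{\sim_{\mathbb{Z}}}\bigr) = n\,d(\Sigma(2,3,5)) = 2n$, which is nonzero for $n\neq 0$. Since $[S^3]_{\sim_{\mathbb{Z}}}$ is the identity of $\Theta_{\mathbb{Z}}^3$ and $d(S^3)=0$, it follows that $n\,[\Sigma(2,3,5)]_{\sim_{\mathbb{Z}}} \neq [S^3]_{\sim_{\mathbb{Z}}}$ for all $n\neq 0$; that is, $[\Sigma(2,3,5)]_{\sim_{\mathbb{Z}}}$ has infinite order, so the cyclic subgroup it generates is isomorphic to $\mathbb{Z}$, as claimed.

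Honestly there is no serious obstacle once the $d$-invariant machinery is in hand: all the weight rests on the imported facts, namely that $d$ is an additive homology-cobordism invariant (\cite{OS03}) and the explicit evaluation in Proposition \ref{evencase}. If one were not allowed to quote Proposition \ref{evencase}, the nontrivial step would be the computation $d(\Sigma(2,3,5)) = 2$ itself --- for instance via the fact that $\Sigma(2,3,5)$ bounds the negative-definite $E_8$ plumbing together with Ozsv\'ath--Szab\'o's plumbing/surgery estimates, or via lattice homology. The only point needing care in the argument above is that a priori $[\Sigma(2,3,5)]_{\sim_{\mathbb{Z}}}$ might be a torsion class, and it is exactly the nonvanishing of the homomorphism $d$ on all its multiples that excludes this.
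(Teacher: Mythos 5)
Your argument is correct, but it is not the proof the paper points to: the paper simply quotes this as Fintushel--Stern's theorem, and the original argument in \cite{FS85} is gauge-theoretic, using the $R$-invariant extracted from $SO(3)$-instanton moduli spaces on pseudofree orbifolds (this is also the mechanism behind Theorem \ref{thm:Fur90Revise} later in the paper). What you give instead is the modern Heegaard Floer shortcut: since $d$ descends to a group homomorphism $\Theta_{\mathbb{Z}}^3\to 2\mathbb{Z}$ and $d(\Sigma(2,3,5))=2$ by Proposition \ref{evencase}, the class has infinite order. All the ingredients you invoke are genuinely available --- homology cobordism invariance is Remark \ref{rmk:notcobordant}, the sign rule $d(-Y)=-d(Y)$ is quoted from \cite[Proposition 4.2]{OS03}, the identification $S^3_{-1}(-T_{2,3})=\Sigma(2,3,5)$ follows from the surgery formula in Subsection \ref{subsec: d-inv.}, and the one fact the paper never states explicitly, additivity of $d$ under connected sum, is \cite[Theorem 4.3]{OS03} and is standard. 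Your approach is in fact stronger than what the statement asks for: because $d$ is a homomorphism, it shows the class survives in a $\mathbb{Z}$ quotient, which is the germ of Fr\o yshov's summand result \cite{Fro02}, whereas the instanton argument of \cite{FS85} only detects infinite order (a subgroup, not a splitting). The trade-off is that your route is anachronistic relative to 1985 and leans on the full Ozsv\'ath--Szab\'o package, while the original proof is self-contained in Donaldson-era gauge theory. You correctly flag the only delicate point, namely ruling out torsion, and the homomorphism property handles it.
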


Furthermore $\Theta_{\mathbb{Z}}^3$ contains a $\mathbb{Z}^{\infty}$-subgroup.

\begin{thm}[Furuta~\cite{Fur90}]
\label{thm(Fur90)}
The group $\Theta_{\mathbb{Z}}^3$ contains the subgroup 
\[
\bigoplus_{n=1}^{\infty}
\mathbb{Z}[\Sigma(2,3,6n-1)]_{\sim_{\mathbb{Z}}}
=
\bigoplus_{n=1}^{\infty}
\mathbb{Z}[S^3_{-1/n}(-T_{2,3})]_{\sim_{\mathbb{Z}}}.
\]
\end{thm}

Let $(p_n,q_n,r_n)\in\mathbb{Z}^3$ satisfy $1<p_n<q_n<r_n$,
$\gcd(p_n,q_n)=\gcd(q_n,r_n)=\gcd(r_n,p_n)=1$, and $p_n q_n + p_n r_n - q_n r_n = 1$. 
In this case the Fintushel-Stern invariant $R(p_n,q_n,r_n)$ is equal to $1$ (see \cite{NZ85}). 
Since this value is positive, the following statement follows from the proof of Theorem~\ref{thm(Fur90)}.

\begin{thm}[Furuta~\cite{Fur90}]
\label{thm(Fur90reinterpretation)}
Let $p_n, q_n$, and $r_n$ be positive integers. 
If
\[
p_nq_n+p_nr_n-q_nr_n=1
 \quad \text{and} \quad 
p_nq_nr_n<p_{n+1}q_{n+1}r_{n+1}, 
\]
then $\Theta_{\mathbb{Z}}^3$ contains the subgroup
\[
\bigoplus_{n=1}^{\infty}
\mathbb{Z}[\Sigma(p_n,q_n,r_n)] .
\]
\end{thm}

On the other hand, it is known that $\Theta_{\mathbb{Z}}^3$ has a $\mathbb{Z}$-summand.

\begin{thm}[Fr{\o}yshov~\cite{Fro02}]
There exists a subgroup $A$ of $\Theta_{\mathbb{Z}}^3$ such that
\[
\Theta_{\mathbb{Z}}^3
=
A \oplus
\mathbb{Z}[\Sigma(2,3,5)]_{\sim_{\mathbb{Z}}}.
\]
\end{thm}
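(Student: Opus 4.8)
The plan is to exhibit a split surjection of abelian groups $\Theta_{\mathbb{Z}}^3 \twoheadrightarrow \mathbb{Z}$ that carries $[\Sigma(2,3,5)]_{\sim_{\mathbb{Z}}}$ to a generator. Its kernel will serve as the submodule $A$, and the section determined by $[\Sigma(2,3,5)]_{\sim_{\mathbb{Z}}}$ will furnish the direct-sum decomposition. This is the Heegaard Floer reformulation of Fr\o yshov's original argument (which instead used an instanton-theoretic $h$-invariant): once one has \emph{any} homology cobordism homomorphism to $\mathbb{Z}$ that is nonzero on a class whose image it hits a generator, the splitting is automatic.

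First I would record that the $d$-invariant descends to a group homomorphism on $\Theta_{\mathbb{Z}}^3$. Indeed, $d$ is a homology cobordism invariant (Remark \ref{rmk:notcobordant}), it is additive under connected sum and vanishes on $S^3$ (standard properties of the $d$-invariant, \cite{OS03}), and on a homology $3$-sphere it takes values in $2\mathbb{Z}$. Hence $\phi := \tfrac12\, d$ is a well-defined homomorphism $\phi\colon \Theta_{\mathbb{Z}}^3 \to \mathbb{Z}$.

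Next I would evaluate $\phi$ on the Poincar\'e sphere. Since $p=2$ is even and $2\cdot 3 + 2\cdot 5 - 3\cdot 5 = 1$, Proposition \ref{evencase} gives $d(\Sigma(2,3,5)) = (3+5)/4 = 2$, so $\phi\bigl([\Sigma(2,3,5)]_{\sim_{\mathbb{Z}}}\bigr) = 1$ and $\phi$ is surjective. Because $\mathbb{Z}$ is free, the short exact sequence
$$0 \longrightarrow \ker\phi \longrightarrow \Theta_{\mathbb{Z}}^3 \xrightarrow{\ \phi\ } \mathbb{Z} \longrightarrow 0$$
splits, with section $n \mapsto n\,[\Sigma(2,3,5)]_{\sim_{\mathbb{Z}}}$ whose image is the infinite cyclic subgroup $\mathbb{Z}[\Sigma(2,3,5)]_{\sim_{\mathbb{Z}}}$ (infinite cyclic because this class has $\phi$-value $1$, hence infinite order). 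Thus $\Theta_{\mathbb{Z}}^3 = A \oplus \mathbb{Z}[\Sigma(2,3,5)]_{\sim_{\mathbb{Z}}}$ with $A := \ker\phi$, and it remains only to see $A \neq 0$: by Furuta's Theorem \ref{thm:Fur90} the group $\Theta_{\mathbb{Z}}^3$ contains a subgroup isomorphic to $\mathbb{Z}^\infty$, so it is not infinite cyclic, and therefore $A \cong \Theta_{\mathbb{Z}}^3/\mathbb{Z}[\Sigma(2,3,5)]_{\sim_{\mathbb{Z}}}$ cannot be trivial.

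The one genuinely deep ingredient is imported wholesale and not re-proved here: that $d$ is simultaneously a homology cobordism invariant, additive under connected sum, and integer- (indeed even-) valued on homology spheres, so that $\tfrac12\, d$ really is a homomorphism onto $\mathbb{Z}$. In Fr\o yshov's original treatment this role is played by the construction and properties of the instanton $h$-invariant; in the present setting it is subsumed in \cite{OS03}, together with the even-ness already recorded in the excerpt. Everything after that — the computation $d(\Sigma(2,3,5)) = 2$ via Proposition \ref{evencase}, the splitting of a short exact sequence onto a free abelian group, and the appeal to Furuta's theorem to rule out $A = 0$ — is formal, so I do not expect a real obstacle there.
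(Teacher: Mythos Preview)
The paper does not prove this theorem; it is quoted in Subsection~\ref{subsec: homology cobordism group} as a background result from \cite{Fro02}, with no argument supplied. Your proof is correct and self-contained: the homomorphism $\tfrac12 d\colon \Theta_{\mathbb{Z}}^3 \to \mathbb{Z}$ is well-defined by the properties of $d$ from \cite{OS03}, the computation $d(\Sigma(2,3,5))=2$ via Proposition~\ref{evencase} is valid since $2\cdot3+2\cdot5-3\cdot5=1$, and the splitting and nontriviality of $A$ follow formally from freeness of $\mathbb{Z}$ and Theorem~\ref{thm:Fur90}. As you rightly observe, Fr\o yshov's original argument in \cite{Fro02} used his instanton $h$-invariant in place of $d$, but the skeleton is identical; since the paper offers no proof of its own, there is nothing further to compare.
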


Furthermore, the group $\Theta_{\mathbb{Z}}^3$ has a $\mathbb{Z}^{\infty}$-summand.

\begin{thm}[Dai-Hom-Stoffregen-Truong~\cite{DHST23}]
There exists a subgroup $A$ of $\Theta_{\mathbb{Z}}^3$ such that
\[
\Theta_{\mathbb{Z}}^3
=
A \oplus
\bigoplus_{n=1}^{\infty}
\mathbb{Z}[\Sigma(2n+1,4n+1,4n+3)]_{\sim_{\mathbb{Z}}}.
\]
\end{thm}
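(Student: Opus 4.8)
\emph{Outline of proof.}\quad The plan is to prove the stronger summand statement by moving from the $d$-invariant — a single homomorphism $\Theta_{\mathbb{Z}}^3\to\mathbb{Z}$, which cannot by itself detect an infinite-rank \emph{summand} — to the involutive Heegaard Floer package of Hendricks--Manolescu together with the algebraic formalism of Dai--Hom--Stoffregen--Truong \cite{DHST23}. First I would attach to each homology $3$-sphere $Y$ the \emph{$\iota$-complex} $(\mathit{CF}^-(Y),\iota_Y)$ over $\mathbb{F}[U]$ with $\mathbb{F}=\mathbb{Z}/2\mathbb{Z}$, where $\iota_Y$ is the conjugation symmetry (a homotopy involution), and then weaken this to an \emph{almost $\iota$-complex}, in which the relations $\iota^2\simeq\mathrm{id}$ and $\iota\partial\simeq\partial\iota$ are imposed only modulo the top power of $U$, together with the induced notion of \emph{almost local equivalence}. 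Tensor product makes the set $\widehat{\mathfrak{I}}$ of almost-local-equivalence classes an abelian group, connected sum makes
$$h\colon\Theta_{\mathbb{Z}}^3\longrightarrow\widehat{\mathfrak{I}},\qquad [Y]_{\sim_{\mathbb{Z}}}\longmapsto\bigl[(\mathit{CF}^-(Y),\iota_Y)\bigr],$$
a well-defined homomorphism, and $\widehat{\mathfrak{I}}$ carries a total order compatible with addition.

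Next I would import the detection mechanism. Following \cite{DHST23}, every almost $\iota$-complex is almost-locally equivalent to an essentially unique \emph{standard complex}, a small model determined by a finite alternating string of nonzero integer parameters; from that string one extracts, for each $j\ge1$, a homomorphism $\phi_j\colon\widehat{\mathfrak{I}}\to\mathbb{Z}$. Since a fixed class involves only finitely many nonzero parameters, these assemble into $\Phi=(\phi_j)_{j\ge1}\colon\widehat{\mathfrak{I}}\to\bigoplus_{j\ge1}\mathbb{Z}$; this $\Phi$ is surjective (the standard complex with single parameter $j$ is sent to $e_j$), hence split because $\bigoplus_{j\ge1}\mathbb{Z}$ is free. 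The strategy is then to understand the composite $\Phi\circ h$ on the family $Y_n:=\Sigma(2n+1,4n+1,4n+3)$.

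The computational heart, and the step I expect to be by far the hardest, is to determine $h(Y_n)$ explicitly. Since $pq+pr-qr=1$ exhibits $Y_n$ as the boundary of the negative-definite plumbing on the almost simple linear graph of Figure~\ref{almost simple linear graph}, I would combine Ozsv\'ath--Szab\'o's and N\'emethi's lattice-cohomology / graded-roots description of $HF^-$ of negative-definite plumbed $3$-manifolds with the description of the conjugation action $\iota$ on such plumbings due to Dai--Manolescu. The goal is to read off from the graded root of $Y_n$ a standard complex representing $h(Y_n)$ whose $\phi$-values make the infinite matrix $M=[\phi_j(Y_n)]_{j,n\ge1}$ triangular with $\pm1$ on the diagonal and with finitely supported columns (a staircase shape growing with $n$); the value $d(Y_n)=2n$ recorded in Theorem~\ref{thm:exm of [KS20]}(1) serves as a consistency check that these complexes genuinely grow. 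Maintaining uniform control of the off-diagonal entries as $n$ varies — i.e.\ proving that the graded roots of the entire family have the required shape — is the delicate point, and it is precisely the almost simple linear structure of the plumbing graph that keeps the bookkeeping manageable.

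Finally I would assemble the conclusion. With the explicit staircase form of $M$ in hand, $M$ is an automorphism of $\bigoplus_j\mathbb{Z}$ with integral, finitely supported inverse, so $\rho:=M^{-1}\circ\Phi\colon\widehat{\mathfrak{I}}\to\bigoplus_n\mathbb{Z}$ is a well-defined retraction with $\rho(h(Y_n))=e_n$; hence $\pi:=\rho\circ h$, followed by the identification $e_n\leftrightarrow[Y_n]_{\sim_{\mathbb{Z}}}$, is a retraction $\Theta_{\mathbb{Z}}^3\twoheadrightarrow\bigoplus_{n=1}^{\infty}\mathbb{Z}[\Sigma(2n+1,4n+1,4n+3)]_{\sim_{\mathbb{Z}}}$ restricting to the identity on the target. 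Setting $A:=\ker\pi$ gives the asserted splitting, and $A$ is nontrivial because $\iota=\mathrm{id}$ for $\Sigma(2,3,5)$ forces $\phi_j(\Sigma(2,3,5))=0$ for all $j$, so $\Sigma(2,3,5)\in A$, while $[\Sigma(2,3,5)]_{\sim_{\mathbb{Z}}}\ne0$ since $d(\Sigma(2,3,5))\ne0$. I would also note that this upgrades Furuta's subgroup result (Theorem~\ref{thm:Fur90Revise}) to a summand for these families, the essential new ingredient being the involutive refinement in place of a gauge-theoretic count.
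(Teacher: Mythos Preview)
The paper does not prove this statement at all: it is stated purely as a background result, attributed to \cite{DHST23}, in the preliminaries section, with no argument given. So there is no ``paper's own proof'' to compare your proposal against.

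That said, your outline is a faithful high-level sketch of the actual argument in \cite{DHST23}: the passage to almost $\iota$-complexes and the group $\widehat{\mathfrak{I}}$, the classification by standard complexes and the resulting homomorphisms $\phi_j$, the computation of the $\iota$-complexes of the $\Sigma(2n+1,4n+1,4n+3)$ via graded roots and the Dai--Manolescu description of $\iota$ on plumbings, and the triangular-matrix/retraction endgame. The observation that $\Sigma(2,3,5)$ lies in $A$ because it is an $L$-space (so its $\iota$-complex is locally trivial) while $d(\Sigma(2,3,5))\neq 0$ is also correct and is how nontriviality of $A$ is seen. As a proof \emph{within this paper} nothing is expected of you here, but as a summary of the cited source your proposal is accurate.
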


Later, a simpler method was introduced in \cite{KS22}, which shows that 
$\Theta_{\mathbb{Z}}^3$ admits several types of $\mathbb{Z}^{\infty}$-summands.

\begin{thm}[Karakurt-\c{S}avk~\cite{KS22}]
\label{thm([KS22])}
There exists a subgroup $A$ of $\Theta_{\mathbb{Z}}^3$ such that 
\begin{align*}
\Theta_{\mathbb{Z}}^3
=A&\oplus\bigoplus_{n=1}^{\infty}\mathbb{Z}[\Sigma(2n+1,4n+1,4n+3)]_{\sim_{\mathbb{Z}}}\\
&\oplus\bigoplus_{n=2}^{\infty}\mathbb{Z}[\Sigma(2n+1,3n+2,6n+1)]_{\sim_{\mathbb{Z}}}
\oplus\bigoplus_{n=1}^{\infty}\mathbb{Z}[\Sigma(2n+1,3n+1,6n+5)]_{\sim_{\mathbb{Z}}}.
\end{align*}
\end{thm}

\begin{rmk}
The Brieskorn homology $3$-spheres
\[
\Sigma(2n+1,4n+1,4n+3), \quad
\Sigma(2n+1,3n+2,6n+1), \quad\text{and}\quad
\Sigma(2n+1,3n+1,6n+5)
\]
have almost simple linear plumbing graphs for all $n\ge1$. 
Moreover, by \cite[Theorem~\ref{thm(Example[KS20])}]{KS20},
\[
d(\Sigma(2n+1,4n+1,4n+3))
=
d(\Sigma(2n+1,3n+2,6n+1))
=
d(\Sigma(2n+1,3n+1,6n+5))
=2n .
\]
\end{rmk}

The following open problem remains.

\begin{prob}
Is $\Theta_{\mathbb{Z}}^3$ isomorphic to $\mathbb{Z}^{\infty}$?
\end{prob}

At present, the only known candidate for torsion in $\Theta_{\mathbb{Z}}^3$ is of order $2$, proposed in \cite{BC24}.

\subsection{The knot concordance group}
\label{subsec(knot-concordance-group)}

Let $K_0$ and $K_1$ be oriented knots in the $3$-sphere $S^3$, and $K_0 \# K_1$ denote their connected sum. 
We first recall the definitions of knot concordance and the knot concordance group. 

\begin{defi}[Knot concordance]
Two oriented knots $K_0$ and $K_1$ are said to be \textit{concordant} if there exists a smoothly embedded annulus $C$ in $S^3 \times [0,1]$ such that 
\begin{itemize}
\item $C$ is diffeomorphic to $S^1 \times [0,1]$, and
\item $\partial C = (-K_0)\times\{0\} \cup K_1 \times\{1\}$. 
\end{itemize}
In this case, we write $K_0 \sim_c K_1$. 
\end{defi}

It is well known that $\sim_c$ is an equivalence relation on the set of oriented knots in $S^3$. 
Let $[K]_{\sim_c}$ denote the equivalence class of a knot $K$ under $\sim_c$.

\begin{defi}[The knot concordance group]
The quotient set
\[
\mathcal{C}
:=\{\text{oriented knots in } S^3\}/\sim_c
\]
forms a commutative group under the operation
\[
[K_0]_{\sim_c} + [K_1]_{\sim_c}
:= [K_0 \# K_1]_{\sim_c}.
\]
The identity element is the class $[O]_{\sim_c}$ of the unknot $O$, 
and the inverse of $[K]_{\sim_c}$ is $[-K]_{\sim_c}$.
This group is called the \textit{knot concordance group}.
\end{defi}

Let $\mathcal{C}_{\mathrm{TS}}$ denote the subgroup of $\mathcal{C}$ generated by topologically slice knots.

Suppose $p,q$, and $r$ are pairwise relatively prime positive integers with $1<p<q<r$. 
Then the double branched cover of $S^3$ branched along the pretzel knot $P(-p,q,r)$ is the Brieskorn homology sphere $\Sigma(p,q,r)$. 

Using gauge theoretic techniques developed in~\cite{Fur90}, the following was shown. 

\begin{cor}[Endo~{\cite[Corollary 3]{End95}}]
\label{cor(End95)}
Let $K_n$ be one of the following pretzel knots: 
\[
\begin{aligned}
&P(-2n-1,4n+1,4n+3), \ P(-2n-1,2n+3,2n^2+4n+1), \\
&P(-2n-1,2n+5,n^2+3n+1), \ P(-4n-1,6n+1,12n+5), \\
&P(-4n-3,6n+5,12n+7). 
\end{aligned}
\]
Then the subgroup $\mathcal{C}_{\mathrm{TS}}$ contains a subgroup 
\[
\bigoplus_{n=1}^{\infty}\mathbb{Z}[K_n].
\]
\end{cor}

\begin{rmk}
All the examples $P(-p,q,r)$ appearing in Corollary~\ref{cor(End95)} satisfy $pq+pr-qr=1$. 
\end{rmk}

Let $n$ be a positive integer and $(p_n,q_n,r_n)\in\mathbb{Z}^3$ satisfy
\[
1<p_n<q_n<r_n,\quad
\gcd(p_n,q_n)=\gcd(q_n,r_n)=\gcd(r_n,p_n)=1,
\]
and
\[
p_nq_n+p_nr_n-q_nr_n=1.
\]

When $p_n,q_n$, and $r_n$ are all odd, the pretzel knot $P(-p_n,q_n,r_n)$ has Alexander polynomial
\[
\Delta_{P(-p_n,q_n,r_n)}(t)
=
\frac{(-p_nq_n+q_nr_n-r_np_n)(t-1)^2+(t+1)^2}{4t}
=1.
\]

In this case the Fintushel-Stern invariant $R(p_n,q_n,r_n)$ is equal to $1$.
Since this value is positive, the following statement follows from the proof of Corollary~\ref{cor(End95)} in \cite{End95}.

\begin{thm}[Endo~\cite{End95}]
\label{thm([End95]reinterpretation)}
Let $p_n,q_n$, and $r_n$ be odd positive integers. 
If
\[
p_nq_n+p_nr_n-q_nr_n=1
 \quad \text{and} \quad 
p_nq_nr_n<p_{n+1}q_{n+1}r_{n+1}, 
\]
then $\mathcal{C}_{\mathrm{TS}}$ contains a subgroup 
\[
\bigoplus_{n=1}^{\infty}\mathbb{Z}[P(-p_n,q_n,r_n)].
\]
\end{thm}

Using the construction of $\mathbb{Z}^{\infty}$-summands in $\Theta_{\mathbb{Z}}^3$
given in Theorem~\ref{thm([KS22])}, it was further shown that some infinite families of
pretzel knots appearing in Corollary~\ref{cor(End95)} actually generate
$\mathbb{Z}^{\infty}$-summands of $\mathcal{C}_{\mathrm{TS}}$.

\begin{cor}[Karakurt-\c{S}avk~{\cite[Corollary 1.5]{KS22}}]
There exist submodules $A_1$ and $A_2$ of $\mathcal{C}_{\mathrm{TS}}$ such that 
\[
\mathcal{C}_{\mathrm{TS}}
=
A_1 \oplus 
\bigoplus_{n=1}^{\infty}\mathbb{Z}[P(-4n-1,6n+1,12n+5)] 
=
A_2 \oplus 
\bigoplus_{n=2}^{\infty}\mathbb{Z}[P(-4n-3,6n+5,12n+7)].
\]
\end{cor}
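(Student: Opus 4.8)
The plan is to promote the infinite-rank \emph{subgroups} of $\mathcal{C}_{\mathrm{TS}}$ produced by Endo's Corollary~\ref{cor:End95} to \emph{summands}, by transporting the homology-cobordism splitting of Theorem~\ref{KS22} to $\mathcal{C}_{\mathrm{TS}}$ along the double branched cover; this is the knot-concordance analogue of the argument of \cite{DHST23} for $\Theta_{\mathbb{Z}}^3$. All the Floer-theoretic work is already packaged in Theorem~\ref{KS22}, and the rest is essentially formal.

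First I would set up the dictionary. Put $K_n^{(1)}:=K(-4n-1,6n+1,12n+5)$ and $K_n^{(2)}:=K(-4n-3,6n+5,12n+7)$. A direct check shows that in each case the parameters $p,q,r$ are pairwise coprime, odd, and satisfy $pq+pr-qr=1$, so by the Alexander polynomial formula recorded in the text $\Delta_{K_n^{(i)}}(t)=1$; hence each $K_n^{(i)}$ is topologically slice and $[K_n^{(i)}]\in\mathcal{C}_{\mathrm{TS}}$ (cf.\ Corollary~\ref{cor:End95}). Since $\det K_n^{(i)}=\abs{\Delta_{K_n^{(i)}}(-1)}=1$, the double branched cover $\Sigma_2(K_n^{(i)})$ of $S^3$ along $K_n^{(i)}$ is an integer homology sphere, and, as recalled in the text, it equals $\Sigma(p,q,r)$; rewriting the parameters, $\Sigma_2(K_n^{(1)})=\Sigma(2m+1,3m+1,6m+5)$ with $m=2n$, and $\Sigma_2(K_n^{(2)})=\Sigma(2m+1,3m+2,6m+1)$ with $m=2n+1$.

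Next I would run the splitting. The double branched cover, equipped with its canonical spin structure, is additive under connected sum and invariant under concordance, and it is compatible with the $\mathbb{Z}$-valued homomorphisms on $\Theta_{\mathbb{Z}}^3$ underlying Theorem~\ref{KS22}. By that theorem, $\{[\Sigma(2m+1,3m+1,6m+5)]\}_{m\ge1}$ is a free basis of a direct $\mathbb{Z}^{\infty}$ summand of $\Theta_{\mathbb{Z}}^3$; since $\bigoplus_{m\ge1}\mathbb{Z}$ splits as the direct sum of the subgroups on its even- and on its odd-indexed coordinates, the even-indexed subcollection $\{[\Sigma_2(K_n^{(1)})]\}_{n\ge1}=\{[\Sigma(4n+1,6n+1,12n+5)]\}_{n\ge1}$ is again a free basis of a direct $\mathbb{Z}^{\infty}$ summand, and likewise $\{[\Sigma_2(K_n^{(2)})]\}_{n\ge2}$ inside the $\Sigma(2m+1,3m+2,6m+1)$ summand of Theorem~\ref{KS22}. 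Composing the projection of $\Theta_{\mathbb{Z}}^3$ onto one such summand with the double branched cover produces a homomorphism $\Phi_i\colon\mathcal{C}_{\mathrm{TS}}\to\bigoplus_n\mathbb{Z}$ sending $[K_n^{(i)}]$ to the $n$-th standard basis vector; consequently $\Phi_i$ carries the subgroup generated by $\{[K_n^{(i)}]\}_n$ isomorphically onto $\bigoplus_n\mathbb{Z}$ (so that subgroup is free abelian on the $[K_n^{(i)}]$, i.e.\ it is $\bigoplus_n\mathbb{Z}[K_n^{(i)}]$), and the standard splitting lemma for abelian groups gives
\[
\mathcal{C}_{\mathrm{TS}}=A_i\oplus\bigoplus_n\mathbb{Z}[K_n^{(i)}],\qquad A_i:=\ker\Phi_i .
\]
To see $A_i\neq 0$, I would observe that $K(-2n-1,4n+1,4n+3)$ is topologically slice but not smoothly slice — because $d(\Sigma(2n+1,4n+1,4n+3))=2n>0$ by Theorem~\ref{thm:exm of [KS20]}(1) obstructs its double branched cover from bounding an integer homology ball — while $[\Sigma(2n+1,4n+1,4n+3)]$ sits in a summand of $\Theta_{\mathbb{Z}}^3$ complementary to the one detected by $\Phi_i$, so $[K(-2n-1,4n+1,4n+3)]$ is a nonzero element of $A_i$.

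The one step requiring genuine care — everything else being routine once Theorem~\ref{KS22} is granted — is checking that $\Phi_i$ is defined on \emph{all} of $\mathcal{C}_{\mathrm{TS}}$, not merely on classes admitting a determinant-one representative (on which the double branched cover honestly lands in $\Theta_{\mathbb{Z}}^3$). The point, as in \cite{DHST23}, is that the double branched cover together with its canonical spin structure is a concordance invariant valued in a cobordism group of spin rational homology spheres, and that the homomorphisms feeding into Theorem~\ref{KS22} are built from $d$-invariant data already defined there; so the composite $\Phi_i$ makes sense on the whole group even though the double branched cover by itself need not take values in $\Theta_{\mathbb{Z}}^3$. This is bookkeeping rather than a new idea, but it is where the care is needed.
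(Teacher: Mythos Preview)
The paper does not prove this statement; it is quoted verbatim from \cite[Corollary 1.5]{KS22} in the preliminaries, with no proof supplied. Your sketch is essentially the argument that \cite{KS22} gives (following \cite{DHST23}): identify the double branched covers of the pretzel knots with the Brieskorn spheres appearing in Theorem~\ref{KS22}, and pull the $\mathbb{Z}^\infty$ splitting of $\Theta_{\mathbb{Z}}^3$ back to $\mathcal{C}_{\mathrm{TS}}$ along the branched-cover map.

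One minor inaccuracy worth flagging: the homomorphisms underlying Theorem~\ref{KS22} are not ``$d$-invariant data'' but rather the connected Heegaard Floer invariants of \cite{DHST23} (and the local-equivalence machinery behind them); the $d$-invariant alone is a single integer and cannot by itself detect a $\mathbb{Z}^\infty$ summand. This does not affect your argument, since those invariants are indeed defined on the spin rational homology cobordism group and hence factor through the branched double cover of an arbitrary knot, which is exactly the point you correctly isolate as the one requiring care.
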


\section{Refinement of the Karakurt-\c{S}avk formula and some properties}
\label{sec(refinement)}
\subsection{Refinement of the Karakurt-\c{S}avk formula} 
Throughout the subsequent sections, we assume that $p$, $q$, and $r$ are integers satisfying $1 < p < q < r$ and
\[
pq + pr - qr = 1.
\]
It follows that $\gcd(p,q)=\gcd(q,r)=\gcd(r,p)=1$. 
In order to compute the $d$-invariant of any Brieskorn homology $3$-sphere $\Sigma(p,q,r)$ with $p$ odd and $pq + pr - qr = 1$, we first establish several lemmas that will be needed later. 

\begin{lem}
\label{lem(inequality[q])}
If $p$ is odd and $pq + pr - qr = 1$, then $p + 1 \le q \le 2p - 1$. 
\end{lem}

\begin{proof}
Note that
\[
3 \le p < q < r = \frac{pq - 1}{q - p}.
\]
It follows that $q^2 - 2pq + 1 < 0$, 
which implies $p - \sqrt{p^2 - 1} < q < p + \sqrt{p^2 - 1}$. 
Since $q > p > p - \sqrt{p^2 - 1}$ and $2p - 1 < p + \sqrt{p^2 - 1} < 2p$, we obtain the desired inequality.
\end{proof}

Define the two-variable quadratic functions
\[
f_{p,q}(x,y) := -(q+r)x^2 + 4qxy - 4(q-p)y^2 - 4y, \quad 
F_{p,q}(x,y) := \frac{f_{p,q}(x,y) + q + r}{4}.
\]

\begin{lem}
\label{lem(F(a,m))}
Suppose that $p$ is odd and $pq + pr - qr = 1$. 
For any complex numbers $a$ and $m$, the function $F_{p,q}(a,m)$ can be written as
\[
F_{p,q}(a,m)
= -\frac{(2(q-p)m - aq + 1)^2 - (q-a)^2}{4(q-p)}.
\]
\end{lem}

\begin{proof}
This follows from straightforward calculations:
\begin{align*}
F_{p,q}(a,m) 
&= \frac{-(q+r)a^2 + 4aqm - 4(q-p)m^2 - 4m + q + r}{4} \\
&= \frac{-4(q-p)m^2 + 4(aq-1)m - (q+r)(a^2-1)}{4} \\
&= -(q-p)m^2 + (aq-1)m - \frac{(a^2-1)(q^2-1)}{4(q-p)}\\
&= -(q-p)\left(m - \frac{aq-1}{2(q-p)}\right)^2 
   + (q-p)\left(\frac{aq-1}{2(q-p)}\right)^2 
   - \frac{(a^2-1)(q^2-1)}{4(q-p)} \\
&= -(q-p)\left(m - \frac{aq-1}{2(q-p)}\right)^2 
   + \frac{(aq-1)^2 - (a^2-1)(q^2-1)}{4(q-p)} \\
&= -\frac{(2(q-p)m - aq + 1)^2}{4(q-p)} 
   + \frac{(q - a)^2}{4(q-p)} \\
&= -\frac{(2(q-p)m - aq + 1)^2 - (q-a)^2}{4(q-p)}.
\end{align*}
This completes the proof.
\end{proof}

\begin{lem}
\label{lem(F(1,m))}
Suppose that $p$ is odd and $pq + pr - qr = 1$. 
For any complex number $m$, the function $F_{p,q}(1,m)$ is given by
\[
F_{p,q}(1,m) = -(q-p)m^2 + (q-1)m.
\]
In particular, $F_{p,q}(1,1) = p-1$. 
\end{lem}

\begin{proof}
From the proof of Lemma \ref{lem(F(a,m))}, we obtain
\begin{align*}
F_{p,q}(1,m)
&= -(q-p)m^2 + (q-1)m - \frac{(1-1)(q^2-1)}{4(q-p)} \\
&= -(q-p)m^2 + (q-1)m.
\end{align*}
This completes the proof.
\end{proof}

We write $n_p$ in the form
\[
n_p = (q-p)t_{p,q} + \alpha_{p,q},
\]
where $t_{p,q}$ and $\alpha_{p,q}$ are the quotient and remainder, respectively. 

Recall that
\begin{align*}
\mathfrak{L}_{p} &= \{(a,m) \in (2\mathbb{Z}+1) \times \mathbb{Z} \mid -p \le a \le p, \; 0 \le m \le n_p\}, \quad\text{and}\\
R_{p,q} &= \{(a,m) \in \mathfrak{L}_{p} \mid F_{p,q}(a,m) \ge F_{p,q}(1,1)\}.
\end{align*}

We also define
\[
\mathfrak{L}'_{p} := \{(a,m) \in (2\mathbb{Z}+1) \times \mathbb{Z} \mid 1 \le a \le p, \; 1 \le m \le n_p\}.
\]

\begin{lem}
\label{lem(L-to-L')}
Suppose that $p$ is odd and $pq + pr - qr = 1$. 
Then 
\[
\mathfrak{L}_p \cap R_{p,q} = \mathfrak{L}'_{p} \cap R_{p,q}.
\]
\end{lem}

\begin{proof}
Note that 
\[
\mathfrak{L}_{p} \setminus \mathfrak{L}'_{p} = \{(a,m) \in (2\mathbb{Z}+1) \times \mathbb{Z} \mid -p \le a \le -1 \text{ or } m = 0\}.
\]

By Proposition \ref{prop(EquivCond)}, $(a,0) \notin R_{p,q}$ for any 
$a \in \{\pm 1, \pm 3, \ldots, \pm p\}$.  

Since
\[
\mathfrak{c}_{p,q}(m) = \frac{2qm}{q+r} > 0
\]
for all $m \in \{1,2,\ldots,n_p\}$, we have
\[
|\mathfrak{c}_{p,q}(m) - a| > |\mathfrak{c}_{p,q}(m) - 1| 
\ge \min_{a \in 2\mathbb{Z}+1} |\mathfrak{c}_{p,q}(m) - a| = \mathfrak{d}_{p,q}(m)
\]
for any $a \in \{-1, -3, \ldots, -p\}$ and $m \in \{1,2,\ldots,n_p\}$.  

Then, by condition (4) in Proposition \ref{prop(EquivCond)}, it follows that $(a,m) \notin R_{p,q}$.  

Therefore, the desired equality holds.
\end{proof}

\begin{lem}
\label{lem(m-less-than-a)}
Suppose that $p$ is odd and $pq + pr - qr = 1$. 
Let $(a,m)$ be any element of $\mathfrak{L}'_p \cap R_{p,q}$. 
If $m \le a$, then
\[
F_{p,q}(a,m) \le F_{p,q}(1,1).
\]
\end{lem}

\begin{proof}
Assume that $m \le a$. Then we have
\begin{align*}
aq - 2(q-p)m - 1 &\ge aq - 2(q-p)a - 1 
= a(2p-q) - 1 \\
&\ge 2p - q - 1 \\
&\ge 2p - (2p-1) - 1 = 0
\end{align*}
by Lemma \ref{lem(inequality[q])}. 

By Lemma \ref{lem(F(a,m))}, it follows that
\begin{align*}
\lefteqn{4(q-p)(F_{p,q}(1,1)-F_{p,q}(a,m)) = -4(q-p)F_{p,q}(a,m)+ 4(q-p)F_{p,q}(1,1)}\\
&= (2(q-p)m - aq + 1)^2 - (q-a)^2 -(2(q-p)-q+1)^2 + (q-1)^2\\
&= (aq - 2(q-p)m - 1)^2 - (q-a)^2 + 4(p-1)(q-p) \\
&\ge (aq - 2(q-p)a - 1)^2 - (q-a)^2 + 4(p-1)(q-p) \\
&= (aq - 2ap + 1)^2 - (q-a)^2 + 4(p-1)(q-p) \\
&= (a^2-1)q^2 - 4(a^2 - 1)pq + 4(a^2-1)p^2 - 4(a-1)p + 4(a-1)q - (a^2 - 1) \\
&= (a^2-1)(q^2 - 4pq + 4p^2 -1 ) + (a-1)(q-p)\\
&= (a-1)((a+1)((2p-q)^2-1) + (q-p)) \ge 0.
\end{align*}

The last inequality follows from Lemma \ref{lem(inequality[q])} and the fact that $a \ge 1$.  

Therefore, the desired inequality holds.
\end{proof}

We determine the maximum of $F_{p,q}(1,m)$ for $1 \le m \le n_p$. 

\begin{prop}
\label{prop(maxF(1,m))}
Suppose that $p$ is odd and $pq + pr - qr = 1$. 
Then 
\[
\max_{1 \le m \le n_p} F_{p,q}(1,m) = F_{p,q}(1, t_{p,q} + \min\{\alpha_{p,q}, 1\}) = (t_{p,q}+1)(n_p+\alpha_{p,q}).
\]
\end{prop}

\begin{proof}
Using Lemma~\ref{lem(F(1,m))}, we compute
\begin{align*}
\lefteqn{\max_{1 \le m \le n_p} F_{p,q}(1,m)}\\
&= \max_{1 \le m \le n_p} (-(q-p)m^2 + (q-1)m) \\
&= \max_{1 \le m \le n_p}
   \Biggl(-(q-p)\biggl(m - \frac{q-1}{2(q-p)}\biggr)^2
         + (q-p)\biggl(\frac{q-1}{2(q-p)}\biggr)^2\Biggr) \\
&= \max_{1 \le m \le n_p}
   \Biggl(-(q-p)\biggl(m - \biggl(t_{p,q}
        + \frac{\alpha_{p,q}}{q-p} + \frac{1}{2}\biggr)\biggr)^2
        + (q-p)\biggl(t_{p,q}
        + \frac{\alpha_{p,q}}{q-p} + \frac{1}{2}\biggr)^2\Biggr).
\end{align*}

Here we used the identity
\[
\frac{q-1}{2(q-p)}
 = \frac{p-1 + q - p}{2(q-p)}
 = \frac{n_p}{q-p} + \frac{1}{2}
 = t_{p,q} + \frac{\alpha_{p,q}}{q-p} + \frac{1}{2}.
\]

\medskip

\noindent
\textbf{Case 1:} $\alpha_{p,q} = 0$.  

In this case, we have $n_p = (q-p)t_{p,q}$ and
\[
\min_{m \in \mathbb{Z}} \left| m - \left(t_{p,q} + \frac{\alpha_{p,q}}{q-p} + \frac{1}{2} \right) \right| 
=
\min_{m \in \mathbb{Z}} \left| m - \left(t_{p,q} + \frac{1}{2} \right) \right| 
= \left| t_{p,q} - \left(t_{p,q} + \frac{1}{2} \right) \right| = \frac{1}{2}.
\]

Since $0 < t_{p,q} \le n_p$ and
\begin{align*}
F_{p,q}(1,1) - F_{p,q}(1,t_{p,q})
&= (q-p)t_{p,q}^2 - (q-1)t_{p,q} + p-1 \\
&= t_{p,q} n_p - (q-p+2n_p)t_{p,q} + 2 n_p \\
&= -n_p (t_{p,q}-1) \le 0,
\end{align*}
it follows that $(1,t_{p,q}) \in \mathfrak{L}_p \cap R_{p,q}$. Therefore,
\begin{align*}
\max_{1 \le m \le n_p} F_{p,q}(1,m)
&= F_{p,q}(1,t_{p,q}) \\
&= -(q-p)\left(t_{p,q}-\left(t_{p,q}+\frac{1}{2}\right)\right)^2
   + (q-p)\left(t_{p,q}+\frac{1}{2}\right)^2 \\
&= -\frac{q-p}{4} + (q-p)\left(t_{p,q}+\frac{1}{2}\right)^2 \\
&= (q-p)(t_{p,q}^2 + t_{p,q}) = (t_{p,q}+1)(q-p) t_{p,q} = (t_{p,q}+1)n_p \\
&= (t_{p,q}+1)(n_p+\alpha_{p,q}).
\end{align*}

\medskip

\noindent
\textbf{Case 2:} $\alpha_{p,q} > 0$.  

In this case, we have $n_p - \alpha_{p,q} = (q-p)t_{p,q}$, and
\[
\min_{m \in \mathbb{Z}} \left| m - \left(t_{p,q} + \frac{\alpha_{p,q}}{q-p} + \frac{1}{2} \right) \right|
= \left| t_{p,q} + 1 - \left(t_{p,q} + \frac{\alpha_{p,q}}{q-p} + \frac{1}{2} \right) \right|
= \left| \frac{1}{2} - \frac{\alpha_{p,q}}{q-p} \right|,
\]
since 
\[
0 < \frac{\alpha_{p,q}}{q-p} < 1. 
\]

Moreover, $t_{p,q}+1 > 0$ and 
\[
n_p-(t_{p,q}+1) = ((q-p)t_{p,q} + \alpha_{p,q})-(t_{p,q}+1) = (q-p-1)t_{p,q} + \alpha_{p,q}-1 \ge 0.
\]
If $t_{p,q}=0$, then $F_{p,q}(1,1) = F_{p,q}(1,t_{p,q}+1)$.  
If $t_{p,q} > 0$, we have
\begin{align*}
F_{p,q}(1,1) - F_{p,q}(1,t_{p,q}+1) 
&= (q-p)(t_{p,q}+1)^2 - (q-1)(t_{p,q}+1) + p-1 \\
&= t_{p,q} ((q-p)t_{p,q} + q - 2p + 1) \\
&= t_{p,q} ((n_p - \alpha_{p,q}) + (q-p) - (p - 1)) \\
&= t_{p,q} (-n_p - \alpha_{p,q} + (q-p)) \\
&\le -2 t_{p,q} \alpha_{p,q} < 0, 
\end{align*}
where we used Lemma \ref{lem(F(1,m))} and $0 < q-p \le (q-p)t_{p,q} = n_p - \alpha_{p,q}$ from Lemma \ref{lem(inequality[q])}.

Hence, $(1,t_{p,q}+1) \in \mathfrak{L}_p \cap R_{p,q}$, and
\begin{align*}
\max_{1 \le m \le n_p} F_{p,q}(1,m) 
&= F_{p,q}(1,t_{p,q}+1)\\
&= -(q-p)(t_{p,q}+1)^2 + (q-1)(t_{p,q}+1) \\
&= (t_{p,q}+1)(-(q-p)(t_{p,q}+1) + q-1) \\
&= (t_{p,q}+1)(-(q-p)t_{p,q} + p-1) \\
&= (t_{p,q}+1)(-(n_p - \alpha_{p,q}) + 2 n_p) \\
&= (t_{p,q}+1)(n_p+\alpha_{p,q}).
\end{align*}

Therefore, the desired result holds.
\end{proof}

\begin{rmk}
\label{rmk(Kuroda)} 
If $\alpha_{p,q}=0$, then by Lemma \ref{lem(F(1,m))} we have
\begin{align*}
\lefteqn{F_{p,q}(1, t_{p,q}+1) - F_{p,q}(1, t_{p,q})}\\
&= -(q-p)(t_{p,q}+1)^2 + (q-1)(t_{p,q}+1) - (-(q-p)t_{p,q}^2 + (q-1)t_{p,q}) \\
&= -(q-p)(2t_{p,q}+1) + (q-1) = -2(q-p)t_{p,q} - (q-p) + (q-1) \\
&= -2 n_p + p - 1 = -(p-1) + p - 1 = 0.
\end{align*}
Therefore, in this case, we have
\[
\max_{1\le m\le n_p}F_{p,q}(1,m)=F_{p,q}(1,t_{p,q}+1)=(t_{p,q}+1)(n_p+\alpha_{p,q}). 
\]
\end{rmk}

We recall that the set $\mathfrak{M}_{p,q}$ is defined by 
\[
\mathfrak{M}_{p,q}= \{(1,t_{p,q}+1)\}
\sqcup\{(a,m)\in(2\mathbb{Z}+1) \times \mathbb{Z} \;\big|\; 3 \le a<m\le n_p, F_{p,q}(a,m) \ge F_{p,q}(1,1)\}. 
\]

Here we prove Theorem~\ref{thm(odd-case)}.  
\begin{proof}[Proof of Theorem~\ref{thm(odd-case)}]
Assume that $p$ is odd and $pq + pr - qr = 1$. 
By Lemmas~\ref{lem(L-to-L')} and \ref{lem(m-less-than-a)}, we have 
\[
d(\Sigma (p,q,r))=\max_{(a,m)\in\mathfrak{L}_p\cap R_{p,q}}F_{p,q}(a,m)=\max_{(a,m)\in\mathfrak{N}'_{p,q}}F_{p,q}(a,m), 
\]
where
\[
\mathfrak{N}'_{p,q} := \{(a,m) \in (2\mathbb{Z}+1) \times \mathbb{Z} \;\big|\; 1 \le a < m \le n_p, \, F_{p,q}(a,m) \ge F_{p,q}(1,1)\}.
\]
Then, by Proposition \ref{prop(maxF(1,m))} and Remark \ref{rmk(Kuroda)}, we obtain
\begin{align*}
d(\Sigma (p,q,r))&=\max_{(a,m)\in \mathfrak{M}_{p,q}}F_{p,q}(a,m)\\
&\ge 
\max_{1 \le m \le n_p} F_{p,q}(1,m) = F_{p,q}(1, t_{p,q} + 1) = (t_{p,q}+1)(n_p+\alpha_{p,q}).
\end{align*}
Hence, the desired result follows.
\end{proof}

The points in $\mathfrak{L}_p$ used in Theorem \ref{thm([KS20]main)}, as well as the candidates for those in $\mathfrak{M}_{p,q}$ used in Theorem \ref{thm(odd-case)}, are shown in Figure \ref{fig(L-and-M)}.

\begin{figure}[htbp]
    \centering
\begin{overpic}[scale=0.6
]
{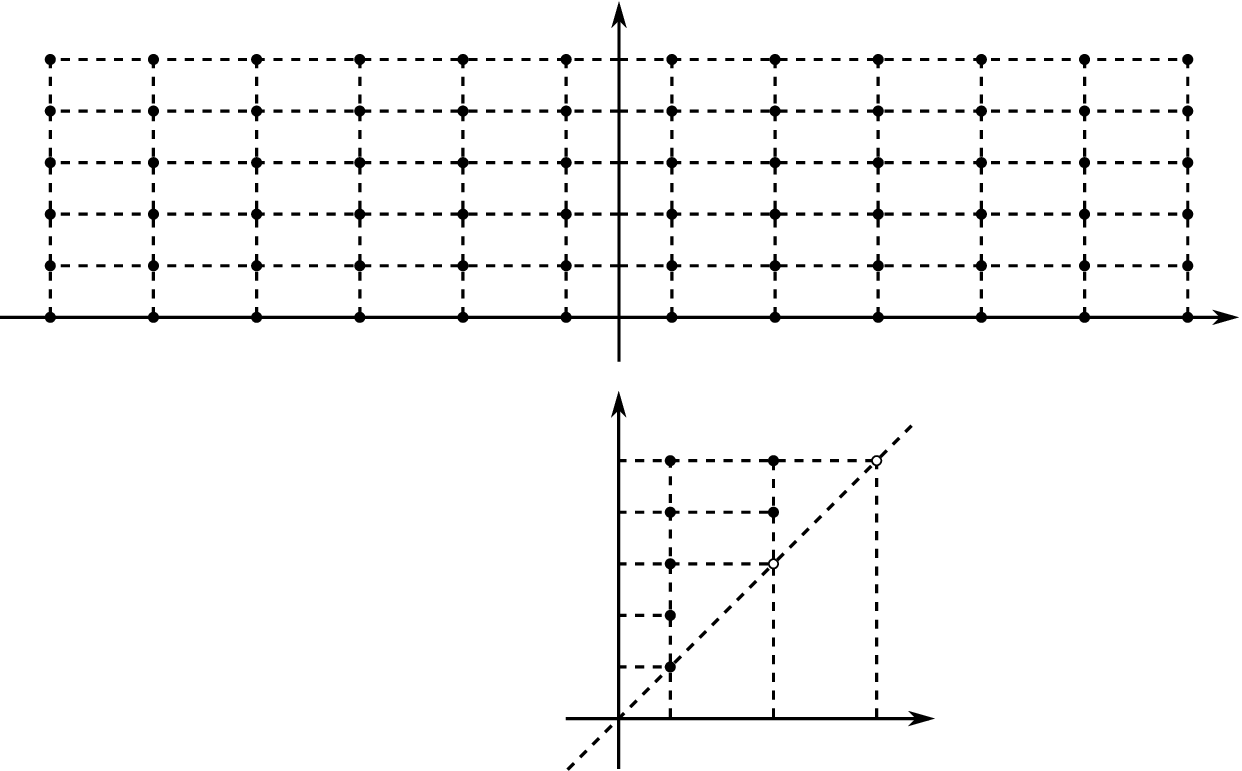}
    \put(46.5,33){$O$}
    \put(100.7,35.4){$a$}
    \put(45.5,60){$m$}

    \put(53.2,33){$1$}
    \put(61.2,33){$3$}
    \put(70.2,33){$5$}
    \put(77.2,33){$\cdots$}
    \put(84.2,33){$p-1$}
    \put(94.2,33){$p$}

    \put(42,33){$-1$}
    \put(34.2,33){$-3$}
    \put(26.2,33){$-5$}
    \put(19,33){$\cdots$}
    \put(4,33){$-(p-1)$}
    \put(-2,33){$-p$}

    \put(47.5,37.5){$1$}
    \put(47.5,42){$2$}
    \put(47.5,46){$3$}
    \put(47.5,49.1){\rotatebox{90}{$\cdots$}}
    \put(46,55){$n_p$}

    \put(50,1){$O$}
    \put(76.2,2.8){$a$}
    \put(45.5,29){$m$}
    \put(73.5,25){$m=a$}

    \put(53.2,1){$1$}
    \put(61.2,1){$3$}
    \put(64.6,1){$\cdots$}
    \put(70.2,1){$n_p$}

    \put(47.5,7){$1$}
    \put(47.5,11){$2$}
    \put(47.5,15.5){$3$}
    \put(47.5,18.5){\rotatebox{90}{$\cdots$}}
    \put(46,24.3){$n_p$}

\end{overpic}
\caption{Points in $\mathfrak{L}_{p}$ (top) and candidates in $\mathfrak{M}_{p,q}$ (bottom)}
\label{fig(L-and-M)}
\end{figure}

\begin{rmk} 
If $F_{p,q}(a,m) \le (t_{p,q}+1)(n_p+\alpha_{p,q})$ for all $(a,m) \in \mathfrak{M}_{p,q}$, then, by Theorem \ref{thm(odd-case)}, we have
\[
d(\Sigma(p,q,r)) = F_{p,q}(1,t_{p,q}+1) = (t_{p,q}+1)(n_p+\alpha_{p,q}). 
\]
\end{rmk}

As an example, we have the following corollary derived from Theorem \ref{thm(odd-case)}.

\begin{cor}
If $p$ is odd and $pq + pr - qr = 1$ with $3 \le p \le 23$ and $q-p \ge n_p$, then
\[
d(\Sigma(p,q,r)) = p-1.
\]
\end{cor}

\begin{proof}
We assume $p$ is odd satisfying $3\le p \le23$ and $1 \le a \le n_p$. 
By Lemma \ref{lem(inequality[q])}, we have 
\[
(p+a-2)-(q-p)=(a-1)+(2p-1)-q>0.
\]
By $a \le n_p =(p-1)/2$, we have 
\[
\frac{3p-1}{2}-a \ge p >0.
\]
If $q-p \ge n_p = (p-1)/2>0$, then we have 
\begin{eqnarray*}
\lefteqn{4(q-p)(F_{p,q}(1,1)-F_{p,q}(a,m))}\\
&=&(2(q-p)m-aq+1)^2-(q-a)^2+4(p-1)(q-p)\\
&\ge&-(q-a)^2+4(p-1)(q-p)\\
&=&-((p+a-2)-(q-p))^2+(p+a-2)^2-(p-a)^2\\
&\ge&-\left((p+a-2)-\frac{p-1}{2}\right)^2+(p+a-2)^2-(p-a)^2\\
&=&-a^2+(3p-1)a-\frac{p^2+10p-7}{4} \ge -\frac{p^2-26p+41}{4}. 
\end{eqnarray*}
Since $p^2-26p+41 \le 0$ and $3 \le p \le 23$ are equivalent. 
By Theorem \ref{thm(odd-case)}, we have the desired result above. 
\end{proof}

\subsection{Some properties}
\label{subsec(Some-properties)}
In this subsection we record several explicit formulas obtained from
Theorem~\ref{thm(odd-case)}. 
These formulas will be used repeatedly in the subsequent sections. 

Let $p,q$, and $r$ be integers satisfying $1<p<q<r$ and $pq+pr-qr=1$.
For such integers $p$ and $q$, the symbol $r_{p,q}$ denotes the corresponding value of $r$. 

Here we prove Theorem~\ref{thm(alpha0case)}.  
\begin{proof}[Proof of Theorem~\ref{thm(alpha0case)}]
Assume that $\alpha_{p,q}=0$. 
Then $n_p=l_{p,q}t_{p,q}$. 
Since $l_{p,q}=q-p$, a direct computation yields
\[
\frac{aq-1}{2(q-p)}
=
at_{p,q}
+\frac{a}{2}
+\frac{1}{2}
+
\frac{(a-1)t_{p,q}}{2n_p}. 
\]

Recall from Lemma~\ref{lem(F(a,m))} that
\[
F_{p,q}(a,m)
=
-\frac{(2l_{p,q}m-aq+1)^2-(q-a)^2}{4l_{p,q}}.
\]

Hence a candidate minimizing $F_{p,q}(a,m)$ is
\[
m
=
at_{p,q}
+\frac{a-1}{2}
+u,
\qquad 0\le u\le \dfrac{t_{p,q}}{2}+1.
\]
Since
\[
F_{p,q}\!\left(a,at_{p,q}+\frac{a-1}{2}\right)
=
F_{p,q}\!\left(a,at_{p,q}+\frac{a+1}{2}\right),
\]
it suffices to consider the case $u\ge1$.

Assume
\[
m
=
at_{p,q}
+\frac{a-1}{2}
+u,
\qquad 1\le u\le \dfrac{t_{p,q}}{2}+1.
\]
Then $2m=(2t_{p,q}+1)a-1+2u$. 

Using $l_{p,q}=n_p/t_{p,q}$, we compute
\begin{align*}
t_{p,q}^2(2l_{p,q}m-aq+1)^2
&=
((2u-1)n_p-t_{p,q}(a-1))^2 \\
&=
(2u-1)^2n_p^2
+t_{p,q}^2(a-1)^2
-2t_{p,q}n_p(2u-1)(a-1).
\end{align*}

Similarly,
\begin{align*}
t_{p,q}^2(q-a)^2
&=
((2t_{p,q}+1)n_p-t_{p,q}(a-1))^2 \\
&=
(2t_{p,q}+1)^2n_p^2
+t_{p,q}^2(a-1)^2 
-2t_{p,q}n_p(2t_{p,q}+1)(a-1).
\end{align*}

Moreover,
\[
t_{p,q}^2\cdot4l_{p,q}(t_{p,q}+1)n_p
=
4t_{p,q}(t_{p,q}+1)n_p^2.
\]

Taking the difference, we obtain
\begin{align*}
&t_{p,q}^2((2l_{p,q}m-aq+1)^2-(q-a)^2
+4l_{p,q}(t_{p,q}+1)n_p) \\
&=
((2u-1)^2-1)n_p^2
+4t_{p,q}n_p(a-1)(t_{p,q}-u+1).
\end{align*}

Since $1\le u\le t_{p,q}/2+1$, we have $t_{p,q}-u+1\ge0$ and $(2u-1)^2-1\ge0$.
Hence the above expression is nonnegative.

Therefore, $(t_{p,q}+1)n_p-F_{p,q}(a,m)\ge0$, and consequently $d(\Sigma(p,q,r))=(t_{p,q}+1)n_p$.
\end{proof}

Recall that
\[
D(p,q,r) =
\begin{cases}
(t_{p,q}+1)(n_p+\alpha_{p,q}), & \text{if $p$ is odd}, \\[2mm]
d(\Sigma(p,q,r)), & \text{if $p$ is even}.
\end{cases}
\]

By Theorem~\ref{thm(odd-case)}, if $p$ is odd, then
\[
d(\Sigma(p,q,r)) \ge D(p,q,r).
\]
All explicitly computed examples considered so far, including those in \cite{KS20}, satisfy
\[
d(\Sigma(p,q,r)) = D(p,q,r).
\]

We further define
\[
\chi_{p,q}(l_{p,q},\alpha_{p,q})
:=
\frac{l_{p,q}^2}{4}
-(\alpha_{p,q}+1)l_{p,q}
+\alpha_{p,q}^2+2.
\]

\begin{prop}
\label{prop(d=D-condition)}
Suppose that $p$ is odd and $pq + pr - qr = 1$. 
If $p \ge \chi_{p,q}(l_{p,q},\alpha_{p,q})$, then $d(\Sigma(p,q,r)) = D(p,q,r)$. 
\end{prop}

\begin{proof}
We first compute
\begin{align*}
4l_{p,q}D(p,q,r)
&=4l_{p,q}(t_{p,q}+1)(n_p+\alpha_{p,q}) \\
&=(2l_{p,q}t_{p,q}+2l_{p,q})(2n_p+2\alpha_{p,q}) \\
&=(p-1-2\alpha_{p,q}+2l_{p,q})(p-1+2\alpha_{p,q}),
\end{align*}
where we used $2l_{p,q}t_{p,q}=p-1-2\alpha_{p,q}$ and $2n_p=p-1$.

Hence
\begin{align*}
\lefteqn{4l_{p,q}D(p,q,r)-(p+l_{p,q}-a)^2} \\
&=(p-1-2\alpha_{p,q}+2l_{p,q})(p-1+2\alpha_{p,q})
 -(p+l_{p,q}-a)^2 \\
&=2(a-1)p-(l_{p,q}-a)^2
 +(2l_{p,q}-2\alpha_{p,q}-1)(2\alpha_{p,q}-1).
\end{align*}

Since $a\ge 3$, we obtain the estimate
\[
4l_{p,q}D(p,q,r)-(p+l_{p,q}-a)^2
\ge
4l_{p,q}D(p,q,r)-(p+l_{p,q}-3)^2.
\]

A direct computation gives
\[4l_{p,q}D(p,q,r)-(p+l_{p,q}-3)^2 
=4p-l_{p,q}^2+4(\alpha_{p,q}+1)l_{p,q}-4\alpha_{p,q}^2-8. 
\]

Therefore
\[
4l_{p,q}D(p,q,r)-(p+l_{p,q}-3)^2 \ge 0
\]
is equivalent to
\[
p \ge
\frac{l_{p,q}^2}{4}
-(\alpha_{p,q}+1)l_{p,q}
+\alpha_{p,q}^2+2
=
\chi_{p,q}(l_{p,q},\alpha_{p,q}).
\]

Under this condition we obtain
\[
4l_{p,q}D(p,q,r)-(p+l_{p,q}-a)^2 \ge 0.
\]

By the expression of $F_{p,q}(a,m)$ in Lemma~\ref{lem(F(a,m))}, 
this inequality implies
\[
F_{p,q}(a,m)\le D(p,q,r).
\]
Hence $d(\Sigma(p,q,r))=D(p,q,r)$. 
\end{proof}

Recall that $s_{p,q}=l_{p,q}/n_p$. 
By Lemma~\ref{lem(inequality[q])}, we have $0 < s_{p,q} < 2$. 
If $s_{p,q}=1$, then $d(\Sigma(p,q,r))=D(p,q,r)$ by Theorem~\ref{thm(Example[KS20])}~(3).

Suppose that $s_{p,q}<1$. 
Then $l_{p,q}<n_p$, and hence $p>2l_{p,q}+1$.
In this case, if 
\[
2l_{p,q}+3 \le p < \chi_{p,q}(l_{p,q},\alpha_{p,q}),
\]
then it may occur that 
\[
d(\Sigma(p,q,r)) \ne D(p,q,r)
\]
by comparison with Proposition~\ref{prop(d=D-condition)}. 

We now consider the case $s_{p,q}>1$. 

\begin{prop}
\label{prop(d=D-condition2)}
Suppose that $p$ is odd, $pq + pr - qr = 1$, and $s_{p,q}>1$.
If
\[
p \le 
\min\{
l_{p,q}+3+2\sqrt{2l_{p,q}},
\; 2l_{p,q}+1
\}, 
\]
then $d(\Sigma(p,q,r))=D(p,q,r)$. 
\end{prop}

\begin{proof}
Since $s_{p,q}>1$, we have $n_p<l_{p,q}$. 
Using $2n_p=p-1$, this inequality implies
\[
p<2l_{p,q}+1.
\]
Moreover, in this case we have $(t_{p,q}, \alpha_{p,q})=(0,n_p)$.

A direct computation gives
\[
4l_{p,q}D(p,q,r)-(p+l_{p,q}-3)^2
=
-p^2+2(l_{p,q}+3)p-(l_{p,q}-1)^2-8.
\]

Thus, the inequality 
\[
p \ge \chi_{p,q}(l_{p,q},\alpha_{p,q})
\]
is equivalent to
\[
4l_{p,q}D(p,q,r)-(p+l_{p,q}-3)^2 \ge 0,
\]
which in turn is equivalent to
\[
l_{p,q}+3-2\sqrt{2l_{p,q}}
\le p \le
l_{p,q}+3+2\sqrt{2l_{p,q}}.
\]

Since $2 \le l_{p,q} \le p-1$ and $p<2l_{p,q}+1$, 
the assumption on $p$ ensures that 
\[
p \ge \chi_{p,q}(l_{p,q},\alpha_{p,q}).
\]
Therefore, by Proposition~\ref{prop(d=D-condition)}, we conclude that
\[
d(\Sigma(p,q,r))=D(p,q,r).
\]
\end{proof}

\begin{lem}
\label{lem(Chi-properties)}
Suppose that $p$ is odd and $pq + pr - qr = 1$. 
The following symmetry holds:
\[
\chi_{p,q}(l_{p,q},\alpha_{p,q})
=
\chi_{p,q}(l_{p,q},l_{p,q}-\alpha_{p,q}).
\]
Moreover, assume that $1 \le \alpha_{p,q} \le l_{p,q}-1$. 
Then
\[
\chi_{p,q}(l_{p,q},\alpha_{p,q})
\le
\chi_{p,q}(l_{p,q},1).
\]
\end{lem}

\begin{proof}
By definition,
\[
\chi_{p,q}(l_{p,q},\alpha_{p,q})
=
\left(\frac{l_{p,q}^2}{4}-l_{p,q}+2\right)
+
\alpha_{p,q}(\alpha_{p,q}-l_{p,q}).
\]

First we prove the symmetry.
We compute
\begin{align*}
\chi_{p,q}(l_{p,q},l_{p,q}-\alpha_{p,q})
&=
\left(\frac{l_{p,q}^2}{4}-l_{p,q}+2\right)
+
(l_{p,q}-\alpha_{p,q})
((l_{p,q}-\alpha_{p,q})-l_{p,q}) \\
&=
\left(\frac{l_{p,q}^2}{4}-l_{p,q}+2\right)
+
\alpha_{p,q}(\alpha_{p,q}-l_{p,q}) \\
&=
\chi_{p,q}(l_{p,q},\alpha_{p,q}).
\end{align*}

Next we prove the inequality.
Completing the square yields
\begin{align*}
\chi_{p,q}(l_{p,q},\alpha_{p,q})
&=
\left(\frac{l_{p,q}^2}{4}-l_{p,q}+2\right)
+
\left(\alpha_{p,q}-\frac{l_{p,q}}{2}\right)^2
-\frac{l_{p,q}^2}{4} \\
&=
-l_{p,q}+2
+
\left(\alpha_{p,q}-\frac{l_{p,q}}{2}\right)^2.
\end{align*}

If $1 \le \alpha_{p,q} \le l_{p,q}-1$, then
\[
\left|\alpha_{p,q}-\frac{l_{p,q}}{2}\right|
\le
\left|1-\frac{l_{p,q}}{2}\right|.
\]
Hence
\[
\left(\alpha_{p,q}-\frac{l_{p,q}}{2}\right)^2
\le
\left(1-\frac{l_{p,q}}{2}\right)^2,
\]
which implies $\chi_{p,q}(l_{p,q},\alpha_{p,q})\le\chi_{p,q}(l_{p,q},1)$. 
\end{proof}

Here we prove Theorem~\ref{thm(1-19)}
\begin{proof}[Proof of Theorem~\ref{thm(1-19)}]
If $\alpha_{p,q}=0$, then the conclusion follows from 
Theorem~\ref{thm(alpha0case)}. 
Hence assume $\alpha_{p,q}\neq 0$. 
In this case $l_{p,q}\ge2$. 

If $s_{p,q}=1$, then the equality $d(\Sigma(p,q,r))=D(p,q,r)$ follows from Theorem~\ref{thm(Example[KS20])}~(3). 

\medskip

\noindent
{\bf Case 1: $s_{p,q}<1$.}

By Proposition~\ref{prop(d=D-condition)}, the inequality 
$d(\Sigma(p,q,r)) \ne D(p,q,r)$ can occur only if
\[
2l_{p,q}+3 \le p < \chi_{p,q}(l_{p,q},\alpha_{p,q}).
\]

If $l_{p,q}\le 16$, then 
\[
\chi_{p,q}(l_{p,q},\alpha_{p,q})
\le \chi_{p,q}(l_{p,q},1)
\le 2l_{p,q}+3
\]
by Lemma~\ref{lem(Chi-properties)}, and hence the above inequality cannot occur. 

If $17\le l_{p,q}\le 19$, a direct verification shows that there are no integers $p,q$ satisfying
\[
2l_{p,q}+3 \le p < \chi_{p,q}(l_{p,q},1)
\]
and
\[
r_{p,q}
=
p+\frac{p^2-1}{l_{p,q}}
\in\mathbb{Z}.
\]

Therefore, if $s_{p,q}<1$ and $1 \le q-p \le 19$, 
then $d(\Sigma(p,q,r))=D(p,q,r)$. 

\medskip

\noindent
{\bf Case 2: $s_{p,q}>1$.}

By Theorem~\ref{thm(Example[KS20])}~(2), (3) and Proposition~\ref{prop(d=D-condition2)}, the inequality 
$d(\Sigma(p,q,r)) \ne D(p,q,r)$ can occur only if 
\[
l_{p,q}+3+2\sqrt{2l_{p,q}}
<
p
\le
2l_{p,q}-3. 
\]

If $l_{p,q}\le 18$, then
\[
2l_{p,q}-3
\le
l_{p,q}+3+2\sqrt{2l_{p,q}},
\]
so the above inequality has no solution.

If $l_{p,q} = 19$, a direct verification shows that there are no integers 
$p,q$ satisfying
\[
l_{p,q}+3+2\sqrt{2l_{p,q}}
<
p
\le
2l_{p,q}-3
\]
and
\[
r_{p,q}
=
p+\frac{p^2-1}{l_{p,q}}
\in\mathbb{Z}.
\]

Therefore, if $s_{p,q}>1$ and $1 \le q-p \le 19$, we obtain $d(\Sigma(p,q,r))=D(p,q,r)$. 
\end{proof}

We present a new case in which $d(\Sigma(p,q,r))$ can be computed by means of Theorem~\ref{thm(1-19)}. 

\begin{cor}
Suppose that $p$ is odd and $pq + pr - qr = 1$. 
If $(t_{p,q},\alpha_{p,q})=(1,1)$, then
\[
d(\Sigma(p,q,r))=p+1.
\]
\end{cor}

\begin{proof}
Suppose that $(t_{p,q},\alpha_{p,q})=(1,1)$. 
Then $n_p=l_{p,q}+1$. 
Since $2n_p=p-1$, we obtain $p=2n_p+1$ and $q=3n_p$. 

Using the relation
\[
r_{p,q}=\frac{pq-1}{q-p},
\]
we compute 
\[
r_{p,q}
=
\frac{6n_p^2+3n_p-1}{n_p-1}
=
6n_p+9+\frac{8}{n_p-1}.
\]

Thus $r_{p,q}\in\mathbb Z$ if and only if $n_p-1$ divides $8$. 
In particular, $1 \le n_p-1 \le 8$, and hence
\[
1 \le q-p \le 8.
\]

Therefore Theorem~\ref{thm(1-19)} applies, and we obtain
\[
d(\Sigma(p,q,r))
=
(t_{p,q}+1)(n_p+\alpha_{p,q})
=
2(n_p+1)
=
2n_p+2
=
p+1.
\]
This completes the proof.
\end{proof}

\begin{rmk}
The $d$-invariant of the Brieskorn homology sphere 
$\Sigma(p,q,r)$ corresponding to the pretzel knot 
$P(-p,q,r)$ in Corollary~\ref{cor(End95)} 
is computable in all cases. 
More precisely, we have the following:

\begin{itemize}

\item[(1)]
Applying Theorem~\ref{thm(Example[KS20])}~(1), we obtain
\[
d(\Sigma(2k+1,4k+1,4k+3)) = 2k.
\]

\item[(2)]
Applying Theorem~\ref{thm(1-19)} in the case $l_{p,q}=2$, we obtain
\[
d(\Sigma(2k+1,2k+3,2k^2+4k+1))
=
\begin{cases}
2m(m+1) & (k=2m),\\
2(2m+1)(m+1) & (k=2m+1).
\end{cases}
\]

\item[(3)]
Applying Theorem~\ref{thm(1-19)} in the case $l_{p,q}=4$, we obtain
\[
d(\Sigma(2k+1,2k+5,k^2+3k+1))
=
\begin{cases}
4m(m+1) & (k=4m),\\
2(2m+1)(m+1) & (k=4m+1),\\
4(m+1)^2 & (k=4m+2),\\
2(2m+3)(m+1) & (k=4m+3).
\end{cases}
\]

\item[(4)]
Applying Theorem~\ref{thm(Example[KS20])}~(3), we obtain
\[
d(\Sigma(4k+1,6k+1,12k+5)) = 4k.
\]

\item[(5)]
Applying Theorem~\ref{thm(Example[KS20])}~(2), we obtain
\[
d(\Sigma(4k+3,6k+5,12k+7)) = 4k+2.
\]

\end{itemize}

In each of the above cases, we have
\[
d(\Sigma(p,q,r)) = D(p,q,r).
\]
\end{rmk}

\section{A parity-independent inequality}
\label{sec(A-parity-independent-inequality)}

Recall that
\[
D(p,q,r)
=
\begin{cases}
(t_{p,q}+1)(n_p+\alpha_{p,q}) & (p \text{ is odd}), \\[4pt]
d(\Sigma(p,q,r)) & (p \text{ is even}).
\end{cases}
\]

The invariant $D(p,q,r)$ provides a unified quantity that controls the size of the $d$-invariant $d(\Sigma(p,q,r))$ under the relation $pq+pr-qr=1$. 
In this section we establish bounds for $D(p,q,r)$ that do not depend on the parity of $p$. 

Here we prove Proposition~\ref{prop(D-ineq)}. 
\begin{proof}[Proof of Proposition~\ref{prop(D-ineq)}]
We divide the proof into two cases according to the parity of $p$.

\medskip
\noindent
\textbf{Case 1: $p$ is odd.}

Put 
\[
n_p=\frac{p-1}{2}.
\]
By definition, for $i=1,2$ we can write
\[
n_p=(q_i-p)t_{p,q_i}+\alpha_{p,q_i},
\qquad 
0 \le \alpha_{p,q_i} < q_i-p.
\]
Since $q_1 \ge q_2$, we have $q_1-p \ge q_2-p$, and hence
\[
t_{p,q_1} \le t_{p,q_2}.
\]

\medskip
\noindent
First, suppose that $t_{p,q_1}=t_{p,q_2}$. 
Then the division algorithm implies 
$\alpha_{p,q_1}=\alpha_{p,q_2}$,
and therefore
\[
D(p_2,q_2,r_2)-D(p_1,q_1,r_1)=0.
\]

\medskip
\noindent
Next, suppose that $t_{p,q_1}<t_{p,q_2}$. 
Then $t_{p,q_1}+1 \le t_{p,q_2}$, and we compute
\begin{align*}
 D(p_2,q_2,r_2)-D(p_1,q_1,r_1) 
&= (t_{p,q_2}+1)(n_p+\alpha_{p,q_2})
  -(t_{p,q_1}+1)(n_p+\alpha_{p,q_1}) \\
&\ge (t_{p,q_1}+2)(n_p+\alpha_{p,q_2})
  -(t_{p,q_1}+1)(n_p+\alpha_{p,q_1}) \\
&= (t_{p,q_1}+1)(\alpha_{p,q_2}-\alpha_{p,q_1})
   +(n_p+\alpha_{p,q_2}).
\end{align*}
Using $n_p=(q_1-p)t_{p,q_1}+\alpha_{p,q_1}$, we obtain
\begin{align*}
\lefteqn{(t_{p,q_1}+1)(\alpha_{p,q_2}-\alpha_{p,q_1})
  +(q_1-p)t_{p,q_1}
  +\alpha_{p,q_1}
  +\alpha_{p,q_2}} \\
&= t_{p,q_1}(q_1-p+\alpha_{p,q_2}-\alpha_{p,q_1})
  +2\alpha_{p,q_2}.
\end{align*}
Since $0 \le \alpha_{p,q_i} < q_i-p$, the right-hand side is non–negative. 
Hence
\[
D(p_1,q_1,r_1)\le D(p_2,q_2,r_2).
\]

\medskip
\noindent
For the extremal values, observe that:

If $q=2p-1$, then $q-p=p-1$ and the quotient of the division of $n_p$ by $q-p$ is $0$. 
Hence
\[
D(p,2p-1,2p+1)
=
(0+1)\left(n_p+n_p\right)
=
p-1.
\]

If $q=p+1$, then $q-p=1$, and therefore $t_{p,q}=n_p$ and $\alpha_{p,q}=0$. 
Thus
\[
D(p,p+1,p^2+p-1)
=
(n_p+1)n_p
=
\frac{p^2-1}{4}.
\]

\medskip
\noindent
\textbf{Case 2: $p$ is even.}

In this case,
\[
D(p,q,r)=d(\Sigma(p,q,r)).
\]
By Proposition~\ref{prop(even-case)} and a direct computation,
\begin{align*}
\lefteqn{4(q_1-p)(q_2-p)
(D(p_2,q_2,r_2)-D(p_1,q_1,r_1))} \\
&=
(-q_1q_2+p(q_1+q_2)-1)(q_1-q_2).
\end{align*}
Since $q_1 \ge q_2$, and by Lemma~\ref{lem(inequality[q])}, 
the right-hand side is non–negative. 
Hence
\[
D(p_1,q_1,r_1)\le D(p_2,q_2,r_2).
\]

Moreover, 
\[
D(p,2p-1,2p+1)=p,
\qquad
D(p,p+1,p^2+p-1)=\frac{p^2+2p}{4}.
\]

Combining these estimates, we obtain the desired inequalities.
\end{proof}

\begin{rmk}
From Proposition~\ref{prop(D-ineq)}, we observe that the lower bound 
\[
(t_{p,q}+1)(n_p+\alpha_{p,q}),
\]
which appears in Theorem~\ref{thm(odd-case)}, and the invariant $d(\Sigma(p,q,r))$ in the case where $p$ is even share analogous monotonicity properties with respect to $q$.

In particular, if $p$ is odd and 
\[
d(\Sigma(p,q,r)) = D(p,q,r),
\]
then the $d$-invariants satisfy the same type of inequality relations as in the case where $p$ is even. 
\end{rmk}

\begin{prop}
\label{prop(D=p-1)}
Suppose that $p$ is odd and $pq + pr - qr = 1$. 
Then
\[
D(p,q,r)=p-1
\quad\text{if and only if}\quad
q-p \ge n_p.
\]
\end{prop}

\begin{proof}
Let $t_{p,q}$ and $\alpha_{p,q}$ denote the quotient and remainder, respectively, when $n_p$ is divided by $q-p$. 
Thus
\[
n_p=(q-p)t_{p,q}+\alpha_{p,q},
\qquad
0\le \alpha_{p,q}<q-p,
\]
and
\[
D(p,q,r)=(t_{p,q}+1)(n_p+\alpha_{p,q}).
\]

First suppose that $q-p \ge n_p$.

If $q-p>n_p$, then $(t_{p,q},\alpha_{p,q})=(0,n_p)$.
If $q-p=n_p$, then $(t_{p,q},\alpha_{p,q})=(1,0)$.
In either case we obtain
\[
D(p,q,r)=2n_p=p-1.
\]

Conversely, suppose that $1\le q-p<n_p$.
Then either $t_{p,q}\ge2$, or $t_{p,q}=1$ with $\alpha_{p,q}\ge1$.

If $t_{p,q}\ge2$, then
\[
D(p,q,r)
\ge 3(n_p+\alpha_{p,q})
\ge 3n_p
>
2n_p
=
p-1.
\]

If $t_{p,q}=1$ and $\alpha_{p,q}\ge1$, then
\[
D(p,q,r)
=
2(n_p+\alpha_{p,q})
\ge
2(n_p+1)
>
2n_p
=
p-1.
\]

Hence $D(p,q,r)>p-1$ whenever $q-p<n_p$.
This completes the proof.
\end{proof}

At the end of this section, we present several examples in which 
\[
d(\Sigma(p,q,r))=D(p,q,r)
\]
holds.

\medskip

\noindent
{\bf The case $p=4k+1$.}

Suppose that $p=4k+1$. 
If $k\le2$, then $p=4k+1\le9$, and hence $q-p\le p-1\le8$. 
Therefore, by Theorem~\ref{thm(1-19)}, we obtain $d(\Sigma(p,q,r))=D(p,q,r)$. 

Assume henceforth that $k\ge3$.

\begin{prop}
Let $p=4k+1$ with $k\ge3$. 
Suppose that $pq + pr - qr = 1$. 
If $q-p\in\{1,2,4,8,k,2k,2k+1,4k\}$, then $d(\Sigma(p,q,r))=D(p,q,r)$. 
In particular, if both $k$ and $2k+1$ are prime, then these exhaust all possible values of $q-p$, and hence the equality always holds.
\end{prop}

\begin{proof}
Since $p=4k+1$, we have
\[
n_p=\frac{p-1}{2}=2k.
\]

If $q-p\le8$, then the equality follows from 
Theorem~\ref{thm(1-19)}.

If $q-p=k$ or $q-p=2k$, then from
\[
n_p=(q-p)t_{p,q}+\alpha_{p,q}
\]
we deduce that $\alpha_{p,q}=0$. 
Hence the conclusion follows from 
Theorem~\ref{thm(alpha0case)}.

If $q-p=2k+1=n_p+1$, then the equality follows from 
Theorem~\ref{thm(Example[KS20])}~(2).

If $q-p=4k=2n_p$, then the equality follows from 
Theorem~\ref{thm(Example[KS20])}~(1).

This completes the proof.
\end{proof}

\medskip

\noindent
{\bf The case $p=4k-1$.}

Suppose that $p=4k-1$. 
If $k\le2$, then $p=4k-1\le7$, and hence $q-p\le p-1\le6$. 
Therefore, by Theorem~\ref{thm(1-19)}, we obtain 
\[
d(\Sigma(p,q,r))=D(p,q,r).
\]

Assume henceforth that $k\ge3$.

\begin{prop}
Let $p=4k-1$ with $k\ge3$. 
Suppose that $pq + pr - qr = 1$. 
If $q-p\in\{1,2,4,8,k,2k-1,2k,4k-2\}$, then $d(\Sigma(p,q,r))=D(p,q,r)$. 
In particular, if both $k$ and $2k-1$ are prime, 
then these exhaust all possible values of $q-p$, 
and hence the equality always holds.
\end{prop}

\begin{proof}
Since $p=4k-1$, we have $n_p=2k-1$. 

If $q-p\le8$, then the equality follows from 
Theorem~\ref{thm(1-19)}.

If $q-p=k=(n_p+1)/2$, then the equality follows from 
Theorem~\ref{thm(Example[KS20])}~(4).

If $q-p=2k-1=n_p$, then the equality follows from 
Theorem~\ref{thm(Example[KS20])}~(3).

If $q-p=2k=n_p+1$, then the equality follows from 
Theorem~\ref{thm(Example[KS20])}~(2).

If $q-p=4k-2=2n_p$, then the equality follows from 
Theorem~\ref{thm(Example[KS20])}~(1).

This completes the proof.
\end{proof}

\section{Results for infinite kinds of infinite classes}
\label{sec(infinite-kinds-of-infinite-classes)}

In this section, we present explicit formulas for $d(\Sigma(p,q,r))$ for several infinite families.

Let $p, q,$ and $r$ be integers satisfying $1 < p < q < r$ and $pq + pr - qr = 1$. 
Recall that $n_p=(p-1)/2$, $l_{p,q}=q-p$, and $s_{p,q}=l_{p,q}/n_p$, and define $b_{a,m}:=m-a$. 

\subsection[The case $d(\Sigma(p,q,r))=D(p,q,r)=p-1$]{The case \texorpdfstring{$d(\Sigma(p,q,r))=D(p,q,r)=p-1$}{d(Sigma(p,q,r))=D(p,q,r)=p-1}}

Here we prove Theorem~\ref{thm(d=D=p-1-condition)}. 
\begin{proof}[Proof of Theorem~\ref{thm(d=D=p-1-condition)}]
A direct computation shows that 
\begin{align*}
\lefteqn{F_{p,q}(a,m)}\\
&=\displaystyle\frac{(2(a+1)-(2m-a-1)s_{p,q})(((2m-a+1)s_{p,q}-2(a-1))n_p-2(a-1))}{4s_{p,q}}, 
\end{align*}
and $F_{p,q}(1,1)=p-1$ by Lemma~\ref{lem(F(a,m))}. 

Assume that
\[
a+1 \le m < \frac{s_{p,q}+2}{2s_{p,q}}(a+1).
\]
Then
\[
(a+1)s_{p,q} \le (2m-a-1)s_{p,q} < 2(a+1),
\]
and one verifies that 
\[
F_{p,q}(a,m)\le F_{p,q}(1,1)
\]
whenever $s_{p,q}\ge 1/2$.

On the other hand, $F_{p,q}(a,m)>F_{p,q}(1,1)$ can occur only if
\[
(2(a+1)-(2m-a-1)s_{p,q})
((2m-a+1)s_{p,q}-2(a-1))
>8s_{p,q}.
\]
Since $s_{p,q}>0$, this inequality is equivalent to $F_{p,q}(a,m)>F_{p,q}(1,1)$. 

A straightforward algebraic manipulation shows that the above inequality is equivalent to
\[
\frac{2-s_{p,q}}{s_{p,q}}\frac{a-1}{2}
<
b_{a,m}
<
\frac{2-s_{p,q}}{s_{p,q}}\frac{a+1}{2}.
\]

Now assume that
\[
s_{p,q}=\frac{2u}{u+1}.
\]
Then
\[
\frac{2-s_{p,q}}{s_{p,q}}
=
\frac{1}{u}.
\]

Put $a_0 := (a-1)/2 \in \mathbb{N}$. 
Then the inequality becomes
\[
\frac{a_0}{u}
<
b_{a,m}
<
\frac{a_0+1}{u}.
\]

The length of this interval is
\[
\frac{a_0+1}{u}-\frac{a_0}{u}=\frac{1}{u}<1,
\]
and hence it contains no integer. 
Therefore there exists no pair $(a,m)$ satisfying
\[
F_{p,q}(a,m)>F_{p,q}(1,1).
\]
Consequently, $d(\Sigma(p,q,r))=F_{p,q}(1,1)=p-1$. 

This completes the proof.
\end{proof}

\begin{prop}
\label{prop(D=d-partI)}
Let $u$ and $v$ be positive integers. 
Suppose 
\[
(p_{u,v}, q_{u,v}, r_{u,v})
=
(4u(u+1)v-2u-1, 4u(2u+1)v-4u-1, 4(u+1)(2u+1)v-4u-3). 
\]
Then $d(\Sigma(p_{u,v},q_{u,v},r_{u,v}))=D(p_{u,v},q_{u,v},r_{u,v})=p_{u,v}-1=2(u+1)(2uv-1)$. 
\end{prop}

\begin{proof}
A direct computation shows that 
$p_{u,v}$, $q_{u,v}$, and $r_{u,v}$ are positive integers satisfying
$1<p_{u,v}<q_{u,v}<r_{u,v}$ and $p_{u,v}q_{u,v}+p_{u,v}r_{u,v}-q_{u,v}r_{u,v}=1$. 

Moreover, $p_{u,v}$ is odd. 
We compute $q_{u,v}-p_{u,v}=2u(2uv-1)$ and $p_{u,v}-1=2(u+1)(2uv-1)$.
Hence
\[
s_{p_{u,v},q_{u,v}}
=
\frac{2(q_{u,v}-p_{u,v})}{p_{u,v}-1}
=
\frac{2u}{u+1}.
\]

Therefore Theorem~\ref{thm(d=D=p-1-condition)} applies, and we obtain $d(\Sigma(p_{u,v},q_{u,v},r_{u,v}))=p_{u,v}-1$. 
This completes the proof.
\end{proof}

\begin{cor}
\label{cor(D=d-partI)}
Let $u$ and $v$ be positive integers. 
Suppose 
\[
(p_{u,v}, q_{u,v}, r_{u,v})
=
(4u(u+1)v-2u-1, 4u(2u+1)v-4u-1, 4(u+1)(2u+1)v-4u-3). 
\]
For each fixed positive integer $v$, then the group $\Theta_{\mathbb{Z}}^3$ contains the subgroup 
\[
\bigoplus_{u=1}^{\infty}
\mathbb Z[\Sigma(p_{u,v},q_{u,v},r_{u,v})]_{\sim_{\mathbb Z}}.
\]

Similarly, for each fixed positive integer $u$, the group $\Theta_{\mathbb{Z}}^3$ contains the subgroup 
\[
\bigoplus_{v=1}^{\infty}
\mathbb Z[\Sigma(p_{u,v},q_{u,v},r_{u,v})]_{\sim_{\mathbb Z}}.
\]
\end{cor}

\begin{proof}
A direct computation shows that 
$p_{u,v}q_{u,v}+p_{u,v}r_{u,v}-q_{u,v}r_{u,v}=1$. 
Moreover, for each fixed positive integer $v$, the sequence $(p_{u,v}q_{u,v}r_{u,v})_{u,v=1}^{\infty}$ is strictly increasing in $u$, and for each fixed positive integer $u$, it is strictly increasing in $v$.

Hence the assumptions of Theorem~\ref{thm(Fur90reinterpretation)} are satisfied in both cases.  It follows that the corresponding families are linearly independent in $\Theta_{\mathbb Z}^3$, which proves that they generate free abelian subgroups of infinite rank.
\end{proof}

\begin{rmk}
If $u>1$, then
\[
s_{p_{u,v},q_{u,v}}=\frac{2u}{u+1}\in(1,2).
\]
Moreover, if $(u_1,v_1)\neq (u_2,v_2)$, then $(p_{u_1,v_1}, q_{u_1,v_1})\neq(p_{u_2,v_2}, q_{u_2,v_2})$, so the pairs $(p_{u,v},q_{u,v})$ are mutually distinct.

Consequently, there exist infinitely many pairwise distinct Brieskorn homology spheres $\Sigma(p_{u,v},q_{u,v},r_{u,v})$ for which the $d$-invariant is explicitly computable. 
More precisely, by Proposition~\ref{prop(D=d-partI)} and Corollary~\ref{cor(D=d-partI)}, these manifolds satisfy 
\[
d(\Sigma(p_{u,v},q_{u,v},r_{u,v}))
=
D(p_{u,v},q_{u,v},r_{u,v})
\quad\text{and}\quad
s_{p_{u,v},q_{u,v}}\in(1,2).
\]
\end{rmk}

Here we pose the following question. 

\begin{que}
Does there exist a double sequence $((p_{n,k},q_{n,k},r_{n,k}))_{n,k=1}^{\infty}$ and a submodule $A$ of $\Theta_{\mathbb Z}^3$ such that
\[
\Theta_{\mathbb Z}^3
=
A
\oplus
\bigoplus_{n=1}^{\infty}
\bigoplus_{k=1}^{\infty}
\mathbb Z[\Sigma(p_{n,k},q_{n,k},r_{n,k})],
\]
and such that for each fixed positive integer $n$,
\[
d(\Sigma(p_{n,k_1},q_{n,k_1},r_{n,k_1}))
=
d(\Sigma(p_{n,k_2},q_{n,k_2},r_{n,k_2}))
\]
for all positive integers $k_1$ and $k_2$?
\end{que}

\subsection[The case $d(\Sigma(p,q,r))=D(p,q,r)\neq p-1$]{The case \texorpdfstring{$d(\Sigma(p,q,r))=D(p,q,r)\neq p-1$}{d(Sigma(p,q,r))=D(p,q,r) neq p-1}}

\begin{prop}
\label{prop(D=d-partII)}
Let $k_1$ and $k_2$ be positive integers. 
Suppose 
\[
(p_{k_1,k_2}, q_{k_1,k_2}, r_{k_1,k_2})
=
(2k_1k_2+1, (2k_1+1)k_2+1, 2k_1(2k_1+1)k_2+4k_1+1). 
\]
Then $d(\Sigma(p_{k_1,k_2},q_{k_1,k_2},r_{k_1,k_2}))=D(p_{k_1,k_2},q_{k_1,k_2},r_{k_1,k_2})=(k_1+1)k_1k_2$. 
\end{prop}

\begin{proof}
A direct computation shows that $p_{k_1,k_2}$, $q_{k_1,k_2}$, and $r_{k_1,k_2}$ are positive integers satisfying $1<p_{k_1,k_2}<q_{k_1,k_2}<r_{k_1,k_2}$ and $p_{k_1,k_2}q_{k_1,k_2}+p_{k_1,k_2}r_{k_1,k_2}-q_{k_1,k_2}r_{k_1,k_2}=1$. 

Moreover, $p_{k_1,k_2}$ is odd, $n_{p_{k_1,k_2}}=k_1k_2$, and $l_{p_{k_1,k_2},q_{k_1,k_2}}=q_{k_1,k_2}-p_{k_1,k_2}=k_2$. 
By definitions of $t_{p_{k_1,k_2},q_{k_1,k_2}}$ and $\alpha_{p_{k_1,k_2},q_{k_1,k_2}}$, we have $t_{p_{k_1,k_2},q_{k_1,k_2}} = k_1$ and $\alpha_{p_{k_1,k_2},q_{k_1,k_2}}=0$. 

Hence, by Theorem~\ref{thm(alpha0case)}, 
\[d(\Sigma(p_{k_1,k_2},q_{k_1,k_2},r_{k_1,k_2}))
=
(t_{p_{k_1,k_2},q_{k_1,k_2}}+1)n_{p_{k_1,k_2}}
=
(k_1+1)k_1k_2. 
\]
This completes the proof.
\end{proof}

\begin{rmk}
Proposition~\ref{prop(D=d-partII)} provides a complete list of all explicit examples satisfying the assumptions of Theorem~\ref{thm(alpha0case)}.
\end{rmk}

\begin{cor}
\label{cor(D=d-partII)}
Let $k_1$ and $k_2$ be positive integers. 
Suppose 
\[
(p_{k_1,k_2}, q_{k_1,k_2}, r_{k_1,k_2})
=
(2k_1k_2+1, (2k_1+1)k_2+1, 2k_1(2k_1+1)k_2+4k_1+1). 
\]
For each fixed positive integer $k_2$, the group $\Theta_{\mathbb{Z}}^3$ contains the subgroup 
\[
\bigoplus_{k_1=1}^{\infty}
\mathbb Z[\Sigma(p_{k_1,k_2},q_{k_1,k_2},r_{k_1,k_2})]_{\sim_{\mathbb Z}}. 
\]

Similarly, for each fixed positive integer $k_1$, the $\Theta_{\mathbb{Z}}^3$ contains the subgroup 
\[
\bigoplus_{k_2=1}^{\infty}
\mathbb Z[\Sigma(p_{k_1,k_2},q_{k_1,k_2},r_{k_1,k_2})]_{\sim_{\mathbb Z}}. 
\]
\end{cor}

\begin{proof}
The proof is identical to that of Corollary~\ref{cor(D=d-partI)}. 
\end{proof}

\begin{rmk}
If $k_1>1$, then
\[
s_{p_{k_1,k_2},q_{k_1,k_2}}=\frac{1}{k_1}\in(0,1).
\]
Moreover, if $(k_{1,1},k_{2,1})\neq(k_{1,2},k_{2,2})$, then $(p_{k_{1,1},k_{2,1}}, q_{k_{1,1},k_{2,1}})\neq(p_{k_{1,2},k_{2,2}}, q_{k_{1,2},k_{2,2}})$,
so the pairs $(p_{k_1,k_2},q_{k_1,k_2})$ are mutually distinct.

Consequently, there exist infinitely many pairwise distinct Brieskorn homology spheres $\Sigma(p_{k_1,k_2},q_{k_1,k_2},r_{k_1,k_2})$ for which the $d$-invariant is explicitly computable. 
More precisely, by Proposition~\ref{prop(D=d-partII)} and Corollary~\ref{cor(D=d-partII)}, these manifolds satisfy 
\[
d(\Sigma(p_{k_1,k_2},q_{k_1,k_2},r_{k_1,k_2}))
=
D(p_{k_1,k_2},q_{k_1,k_2},r_{k_1,k_2})
\quad\text{and}\quad
s_{p_{k_1,k_2},q_{k_1,k_2}}\in(0,1).
\]
\end{rmk}

We pose the following question. 

\begin{que}
Does there exist an infinite sequence $((p_n,q_n,r_n))_{n=1}^{\infty}$ such that \\
$(\Sigma(p_n,q_n,r_n))_{n=1}^{\infty}$ generates a $\mathbb{Z}^{\infty}$-summand of
$\Theta_{\mathbb Z}^3$ and 
\[
d(\Sigma(p_n,q_n,r_n))
=
D(p_n,q_n,r_n)
>
p_n-1 \quad \text{for all $n$?}
\]
\end{que}

\section[Infinitely many examples of $d(\Sigma(p,q,r)) \neq D(p,q,r)$]{Infinitely many examples of  \texorpdfstring{$d(\Sigma(p,q,r)) \neq D(p,q,r)$}{d(Sigma(p,q,r)) neq D(p,q,r)}}
\label{sec(d-neq-D)}

Let $p,q$, and $r$ be integers satisfying $1 < p < q < r$ and $pq+pr-qr=1$. 

Recall the notation $n_p=(p-1)/2$, $l_{p,q}=q-p$, $s_{p,q}=l_{p,q}/n_p$, and $b_{a,m}=m-a$. 
Write
\[
n_p=t_{p,q}l_{p,q}+\alpha_{p,q},
\qquad
0\le \alpha_{p,q}<l_{p,q},
\]
where $t_{p,q}$ and $\alpha_{p,q}$ denote the quotient and the remainder,
respectively.

Up to the previous section, we have considered only the case where $d(\Sigma(p,q,r))
=
D(p,q,r)
=
(t_{p,q}+1)(n_p+\alpha_{p,q})$.

In this section, we provide a sufficient condition for the inequality 
$d(\Sigma(p,q,r)) \neq D(p,q,r)$ to hold. 

Here we prove Theorem~\ref{thm(D-neq-d-condition)}
\begin{proof}[Proof of Theorem~\ref{thm(D-neq-d-condition)}]
First, observe that
\[
F_{p,q}(a,m)
=
\frac{(2(a+1)-(2m-a-1)s_{p,q})
(((2m-a+1)s_{p,q}-2(a-1))n_p-2(a-1))}{4s_{p,q}}.
\]

For sufficiently large $p$, there exists $(a,m)\in\mathfrak{M}_{p,q}$ such that
\[
2s_{p,q}m \ge (s_{p,q}+2)(a+1).
\]
Set $b_{a,m} := m-a$.

Since $s_{p,q} \neq 2u/(u+1)$ for any $u\in\mathbb{N}$, there exists $(a,m)\in\mathfrak{M}_{p,q}$ satisfying 
\[
\frac{2-s_{p,q}}{s_{p,q}}\cdot\frac{a-1}{2}
<
b_{a,m}
<
\frac{2-s_{p,q}}{s_{p,q}}\cdot\frac{a+1}{2}.
\]
A direct computation shows that this inequality is equivalent to
\[
\frac{(2(a+1)-(2m-a-1)s_{p,q})
((2m-a+1)s_{p,q}-2(a-1))}{4s_{p,q}}
> 2.
\]
Combining this with the expression of $F_{p,q}(a,m)$, we obtain
\[
F_{p,q}(a,m)
>2n_p-\frac{(2(a+1)-(2m-a-1)s_{p,q})
(a-1)}{2s_{p,q}}\ge 2n_p=p-1
\]
under the above choice of $(a,m)$. 
By the assumption $s_{p,q}\in(1,2)$, we have $(t_{p,q}, \alpha_{p,q})=(0,n_p)$, and hence
\[
D(p,q,r)=(0+1)(n_p+0) = p-1,
\]
This completes the proof.
\end{proof}

\begin{prop}
\label{prop(D-neq-dPartI)}
Let $t$ and $k$ be positive integers satisfying $(t,k)\neq(2,1)$.
Define 
\[
(p_{t,k,1}, q_{t,k,1}, r_{t,k,1}) = (4t(2t+1)k+4t+1, (2t+1)(6t+1)k+6t+2, 4t(6t+1)k+12t+1). 
\]
Then $d(\Sigma(p_{t,k,1},q_{t,k,1},r_{t,k,1})) > p_{t,k,1}-1$. 
\end{prop}

\begin{proof}
First observe that $r_{t,k,1}=p_{t,k,1}+8t(2tk+1)$,
and $p_{t,k,1}q_{t,k,1}+p_{t,k,1}r_{t,k,1}-q_{t,k,1}r_{t,k,1}=1$. 

Since 
$
l_{p_{t,k,1},q_{t,k,1}}
=q_{t,k,1}-p_{t,k,1}
=(2t+1)^2k+2t+1
>
2t(2t+1)k+2t
=n_{p_{t,k,1}}$, it suffices, by Proposition~\ref{prop(D=p-1)}, to prove that
\[
F_{p_{t,k,1},q_{t,k,1}}(3,4)
>
F_{p_{t,k,1},q_{t,k,1}}(1,1)
=
D(p_{t,k,1},q_{t,k,1},r_{t,k,1})
=
p_{t,k,1}-1.
\]

A direct computation gives
\begin{align*}
F_{p_{t,k,1},q_{t,k,1}}(3,4)
&=\frac{2}{s_{p_{t,k,1},q_{t,k,1}}}(2-s_{p_{t,k,1},q_{t,k,1}})((3s_{p_{t,k,1},q_{t,k,1}}-2)n_{t,k,1}-2)\\
&=\frac{2}{4t^2s_{p_{t,k,1},q_{t,k,1}}}(4t-2ts_{p_{t,k,1},q_{t,k,1}})((6ts_{p_{t,k,1},q_{t,k,1}}-4t)n_{p_{t,k,1}}-4t)\\
&=\frac{2}{2t(2t+1)}(4t-(2t+1))((6t+3-4t)n_{p_{t,k,1}}-4t)\\
&=\frac{1}{t(2t+1)}((2t-1)(2t+3)n_{p_{t,k,1}}-4t(2t-1))\\
&=2n_{p_{t,k,1}}+\frac{1}{t(2t+1)}((2t-3)n_{p_{t,k,1}}-4t(2t-1))\\
&=p_{t,k,1}-1+2((2t-3)k-1)
\end{align*}
by $s_{p_{t,k,1},q_{t,k,1}}=(2t+1)/2t$. 

If $(t,k)\neq(2,1)$, then $(2t-3)k-1>0$, and hence $F_{p_{t,k,1},q_{t,k,1}}(3,4) > p_{t,k,1}-1$. 

This completes the proof.
\end{proof}

\begin{cor}
\label{cor(D-neq-dPartI)}
Let $t$ and $k$ be positive integers satisfying $(t,k)\neq(2,1)$.
Define 
\[
(p_{t,k,1}, q_{t,k,1}, r_{t,k,1}) = (4t(2t+1)k+4t+1, (2t+1)(6t+1)k+6t+2, 4t(6t+1)k+12t+1). 
\]
For each fixed positive integer $k\neq1$, the group $\Theta_{\mathbb{Z}}^3$ contains the subgroup 
\[
\bigoplus_{t=1}^{\infty}
\mathbb Z[\Sigma(p_{t,k,1},q_{t,k,1},r_{t,k,1})]_{\sim_{\mathbb Z}}. 
\]

Similarly, for each fixed positive integer $t\neq2$, the group $\Theta_{\mathbb{Z}}^3$ contains the subgroup 
\[
\bigoplus_{k=1}^{\infty}
\mathbb Z[\Sigma(p_{t,k,1},q_{t,k,1},r_{t,k,1})]_{\sim_{\mathbb Z}}. 
\]
\end{cor}

\begin{proof}
The proof is identical to that of Corollary~\ref{cor(D=d-partI)}. 
\end{proof}

\begin{rmk}
We have
\[
s_{p_{t,k,1},q_{t,k,1}}
=
1+\frac{1}{2t}\in(1,2).
\]
Moreover, if $(t_1,k_1)\neq (t_2,k_2)$, then $(p_{t_1,k_1,1}, q_{t_1,k_1,1})
\neq
(p_{t_2,k_2,1}, q_{t_2,k_2,1})$, and hence the pairs $(p_{t,k,1},q_{t,k,1})$ are mutually distinct.

Consequently, by Proposition~\ref{prop(D-neq-dPartI)} and
Corollary~\ref{cor(D-neq-dPartI)}, there exist infinitely many pairwise
distinct Brieskorn homology $3$-spheres $\Sigma(p_{t,k,1},q_{t,k,1},r_{t,k,1})$ satisfying 
\[
d(\Sigma(p_{t,k,1},q_{t,k,1},r_{t,k,1}))
>
D(p_{t,k,1},q_{t,k,1},r_{t,k,1})
\quad \text{and} \quad
s_{p_{t,k,1},q_{t,k,1}}\in(1,2).
\]
\end{rmk}

\begin{rmk}
If $(t,k)=(2,1)$, then $(p_{2,1,1},q_{2,1,1},r_{2,1,1})=(49,79,129)$.
In this case,
\[
d(\Sigma(49,79,129))
=
D(49,79,129)
=
48.
\]
\end{rmk}

\begin{prop}
\label{prop(D-neq-dPartII)}
Let $t$ and $k$ be positive integers satisfying 
$t\neq1$, $tk \in 2\mathbb{Z}$, and $(t,k)\in \mathbb{N}^2 \setminus \{(3,2),(4,1)\}$. 
Define 
\[
(p_{t,k,2}, q_{t,k,2}, r_{t,k,2}) = (t(2t-1)k+4t-1, t(3t-1)k+6t-1, (2t-1)(3t-1)k+12t-5). 
\]
Then $d(\Sigma(p_{t,k,2},q_{t,k,2},r_{t,k,2}))>p_{t,k,2}-1$. 
\end{prop}

\begin{proof}
First observe that $r_{t,k,2}=p_{t,k,2}+(2t-1)((2t-1)k+4)$, and that $p_{t,k,2}q_{t,k,2}+p_{t,k,2}r_{t,k,2}-q_{t,k,2}r_{t,k,2}=1$. 

Since $l_{p_{t,k,2},q_{t,k,2}}=q_{t,k,2}-p_{t,k,2}=t(tk+2)>(2t-1)(tk+2)/2=n_{p_{t,k,2}}$, it suffices, by Proposition~\ref{prop(D=p-1)}, to prove that $F_{p_{t,k,2},q_{t,k,2}}(3,4)>F_{p_{t,k,2},q_{t,k,2}}(1,1)=D(p_{t,k,2},q_{t,k,2},r_{t,k,2})=p_{t,k,2}-1$. 

A direct computation gives
\begin{align*}
F_{p_{t,k,2},q_{t,k,2}}(3,4)
&=\frac{2}{s_{p_{t,k,2},q_{t,k,2}}}(2-s_{p_{t,k,2},q_{t,k,2}})((3s_{p_{t,k,2},q_{t,k,2}}-2)n-2)\\
&=\frac{2}{(2t-1)^2s_{p_{t,k,2},q_{t,k,2}}}(4t-2-(2t-1)s_{p_{t,k,2},q_{t,k,2}})\\
&\cdot((3(2t-1)s_{p_{t,k,2},q_{t,k,2}}-4t+2)n_{p_{t,k,2}}-4t+2)\\
&=\frac{2}{2t(2t-1)}(4t-2-2t)((6t-4t+2)n_{p_{t,k,2}}-4t+2)\\
&=\frac{1}{t(2t-1)}(2t-2)(2(t+1)n_{p_{t,k,2}}-4t+2)\\
&=\frac{1}{t(2t-1)}(4(t^2-1)n_{p_{t,k,2}}-4(t-1)(2t-1))\\
&=\frac{2}{t(2t-1)}((t(2t-1)+(t-2))n_{p_{t,k,2}}-2(t-1)(2t-1))\\
&=2n_{p_{t,k,2}}+\frac{2}{t(2t-1)}((t-2)n_{p_{t,k,2}}-2(t-1)(2t-1))\\
&=p_{t,k,2}-1+(t-2)k-2
\end{align*}
by $s_{p_{t,k,2},q_{t,k,2}}=2t/(2t-1)$. 

If $t\neq1$, $tk\in2\mathbb{Z}$, and $(t,k)\in\mathbb{N}^2-\{(3,2), (4,1)\}$, then $(t-2)k-2>0$, and hence
\[
F_{p_{t,k,2},q_{t,k,2}}(3,4)
>
p_{t,k,2}-1.
\]
This completes the proof. 
\end{proof}

\begin{cor}
\label{cor(D-neq-dPartII)}
Let $t$ and $k$ be positive integers satisfying 
$t\neq1$, $tk \in 2\mathbb{Z}$, and $(t,k)\in \mathbb{N}^2 \setminus \{(3,2),(4,1)\}$. 
Define 
\[
(p_{t,k,2}, q_{t,k,2}, r_{t,k,2}) = (t(2t-1)k+4t-1, t(3t-1)k+6t-1, (2t-1)(3t-1)k+12t-5). 
\]
For each fixed positive integer $k\in2\mathbb{N}-\{2\}$, the group $\Theta_{\mathbb{Z}}^3$ contains the subgroup 
\[
\bigoplus_{t=2}^{\infty}
\mathbb Z[\Sigma(p_{t,k,2},q_{t,k,2},r_{t,k,2})]_{\sim_{\mathbb Z}}.
\]

Similarly, for each fixed positive integer $t\in2\mathbb{N}-\{4\}$, the group $\Theta_{\mathbb{Z}}^3$ contains the subgroup 
\[
\bigoplus_{k=1}^{\infty}
\mathbb Z[\Sigma(p_{t,k,2},q_{t,k,2},r_{t,k,2})]_{\sim_{\mathbb Z}}. 
\]
\end{cor}

\begin{proof}
The proof is identical to that of Corollary~\ref{cor(D=d-partI)}. 
\end{proof}

\begin{rmk}
If $t>1$, then 
\[
s_{p_{t,k,2},q_{t,k,2}}
=
1+\frac{1}{2t-1}\in(1,2).
\]
Moreover, if $(t_1,k_1)\neq (t_2,k_2)$, then $(p_{t_1,k_1,2}, q_{t_1,k_1,2})\neq(p_{t_2,k_2,2}, q_{t_2,k_2,2})$, and hence the pairs $(p_{t,k,2},q_{t,k,2})$ are mutually distinct. 

Consequently, by Proposition~\ref{prop(D-neq-dPartII)} and
Corollary~\ref{cor(D-neq-dPartII)}, there exist infinitely many pairwise
distinct Brieskorn homology $3$-spheres $\Sigma(p_{t,k,2},q_{t,k,2},r_{t,k,2})$ satisfying
\[
d(\Sigma(p_{t,k,2},q_{t,k,2},r_{t,k,2}))
>
D(p_{t,k,2},q_{t,k,2},r_{t,k,2})
\quad \text{and} \quad
s_{p_{t,k,2},q_{t,k,2}}\in(1,2).
\]
\end{rmk}

\begin{rmk}
The excluded parameter values correspond to the following exceptional cases.

\medskip
\noindent
(1) Suppose that $t=1$ and $k\in 2\mathbb{N}$. 
Then $(p_{1,k,2},q_{1,k,2},r_{1,k,2})=(k+3,2k+5,2k+7)$. 
In this case we have $s_{p_{1,k,2},q_{1,k,2}}=2$, and hence
\[
d(\Sigma(p_{1,k,2},q_{1,k,2},r_{1,k,2}))
=
D(p_{1,k,2},q_{1,k,2},r_{1,k,2})
=
k+2
\]
by Theorem~\ref{thm(Example[KS20])}~(1).

\medskip
\noindent
(2) When $(t,k)=(3,2)$, we obtain $(p_{3,2,2},q_{3,2,2},r_{3,2,2})=(41,65,111)$, and therefore 
\[
d(\Sigma(41,65,111))=D(41,65,111)=40. 
\]

\medskip
\noindent
(3) When $(t,k)=(4,1)$, we obtain $(p_{4,1,2},q_{4,1,2},r_{4,1,2})=(43,67,120)$.
Hence
\[
d(\Sigma(43,67,120))
=
D(43,67,120)
=
42.
\]
\end{rmk}

We pose the following question. 

\begin{que}
Does there exist an infinite sequence $((p_n,q_n,r_n))_{n=1}^{\infty}$ such that \\
$(\Sigma(p_n,q_n,r_n))_{n=1}^{\infty}$ generates a $\mathbb{Z}^{\infty}$-summand of
$\Theta_{\mathbb Z}^3$ and 
\[
d(\Sigma(p_n,q_n,r_n))
>
D(p_n,q_n,r_n) \quad \text{for all $n$?}
\]
\end{que}

\section[The Fibonacci cases $d(\Sigma(F_{2k+1},F_{2k+2},F_{2k+3}))$]{The Fibonacci cases \texorpdfstring{$d(\Sigma(F_{2k+1},F_{2k+2},F_{2k+3}))$}{d(Sigma(F\_{2k+1},F\_{2k+2},F\_{2k+3}))}}
\label{sec(Fibonacci-cases)}

Let $F_m$ denote the $m$-th Fibonacci number. 
Recall that $F_m$ is even if and only if $m\in 3\mathbb Z$. 
In this section we study the values of $d(\Sigma(F_{2k+1},F_{2k+2},F_{2k+3}))$ and compare them with $D(F_{2k+1},F_{2k+2},F_{2k+3})$. 

If $2k+1\in 3\mathbb Z$, then Proposition~\ref{prop(even-case)} gives 
\[
d(\Sigma(F_{2k+1},F_{2k+2},F_{2k+3}))
=D(\Sigma(F_{2k+1},F_{2k+2},F_{2k+3}))
=\frac{F_{2k+2}+F_{2k+3}}{4}
=\frac{F_{2k+4}}{4}. 
\]

Assume now that $2k+1\notin 3\mathbb Z$.
Then $l_{F_{2k+1},F_{2k+2}}
=q_{F_{2k+1},F_{2k+2}}-p_{F_{2k+1},F_{2k+2}}
=F_{2k+2}-F_{2k+1}
=F_{2k}$. 
Moreover,
\[
F_{2k}-\frac{F_{2k+1}-1}{2}
=\frac{2F_{2k}-F_{2k+1}+1}{2}
=\frac{F_{2k}-F_{2k-1}+1}{2}>0,
\]
where we used $F_{2k+1}=F_{2k}+F_{2k-1}$.
Hence the quotient obtained by dividing
\[
n_{F_{2k+1}}=\frac{F_{2k+1}-1}{2}
\]
by $l_{F_{2k+1},F_{2k+2}}=F_{2k}$ is $0$, and the remainder equals
$(F_{2k+1}-1)/2$.
Therefore,
\[
D(F_{2k+1},F_{2k+2},F_{2k+3})
=(0+1)\left(
\frac{F_{2k+1}-1}{2}
+\frac{F_{2k+1}-1}{2}
\right)
=F_{2k+1}-1.
\]
Note that $F_{F_{2k+1},F_{2k+2}}(1,1)=F_{2k+1}-1$. 

Here we prove Proposition~\ref{prop(Fibonacci-case)}. 
\begin{proof}[Proof of Proposition~\ref{prop(Fibonacci-case)}]
Since $F_{2k+1}$ is odd, we have $2k+1\in\{6j\pm1\}$ for some $j\in\mathbb Z$.
From
\[
l_{F_{2k+1},F_{2k+2}}=F_{2k+2}-F_{2k+1}=F_{2k}
\quad\text{and}\quad
s_{F_{2k+1},F_{2k+2}}
=
\frac{l_{F_{2k+1},F_{2k+2}}}{n_{F_{2k+1}}}
=
\frac{2F_{2k}}{F_{2k+1}-1},
\]
we compute
\begin{align*}
&F_{F_{2k+1},F_{2k+2}}(3,4)\\
&=\frac{2}{s_{F_{2k+1},F_{2k+2}}}(2-s_{F_{2k+1},F_{2k+2}})((3s_{F_{2k+1},F_{2k+2}}-2)n_{F_{2k+1}}-2)\\
&=\frac{F_{2k+1}-1}{F_{2k}}\left(2-\frac{2F_{2k}}{F_{2k+1}-1}\right)\left(\left(3\frac{2F_{2k}}{F_{2k+1}-1}-2\right)k-2\right)\\
&=\frac{2((F_{2k+1}-1)-F_{2k})((6F_{2k}-2(F_{2k+1}-1))n-2(F_{2k+1}-1))}{F_{2k}(F_{2k+1}-1)}\\
&=\frac{2(F_{2k-1}-1)((3F_{2k}-F_{2k+1}+1) \cdot 2n-2(F_{2k+1}-1))}{F_{2k}(F_{2k+1}-1)}\\
&=\frac{2(F_{2k-1}-1)((2F_{2k}-F_{2k-1}+1)(F_{2k+1}-1)-2(F_{2k+1}-1))}{F_{2k}(F_{2k+1}-1)}\\
&=\frac{2(F_{2k-1}-1)(2F_{2k}-F_{2k-1}-1)}{F_{2k}}\\
&=\frac{2(F_{2k-1}-1)(F_{2k}+F_{2k-2}-1)}{F_{2k}}.
\end{align*}

Hence 
\begin{align*}
&F_{2k}(F_{F_{2k+1},F_{2k+2}}(3,4)-F_{F_{2k+1},F_{2k+2}}(1,1))\\
&=2(F_{2k-1}-1)(F_{2k}+F_{2k-2}-1)-F_{2k}(F_{2k+1}-1)\\
&=(2F_{2k-1}-F_{2k+1}-1)F_{2k}+2(F_{2k-1}-1)(F_{2k-2}-1)\\
&=(F_{2k-1}-F_{2k}-1)F_{2k}+2(F_{2k-1}-1)(F_{2k-2}-1)\\
&=2(F_{2k-1}-1)(F_{2k-2}-1)-(F_{2k-2}+1)F_{2k}\\
&=(2F_{2k-1}-2)(F_{2k-2}-1)-(F_{2k-2}-1)F_{2k}-2F_{2k}\\
&=(2F_{2k-1}-F_{2k}-2)(F_{2k-2}-1)-2F_{2k}\\
&=(F_{2k-1}-F_{2k-2}-2)(F_{2k-2}-1)-2F_{2k}\\
&=(F_{2k-3}-2)(F_{2k-2}-1)-2F_{2k}\\
&=F_{2k-3}F_{2k-2}-F_{2k-3}-2F_{2k-2}+2-2F_{2k}\\
&=F_{2k-3}F_{2k-2}-F_{2k-1}-F_{2k-2}+2-2F_{2k}\\
&=F_{2k-3}F_{2k-2}-3F_{2k}+2.
\end{align*}

If $k=5$, then
\[
F_{10}(F_{F_{11},F_{12}}(3,4)-F_{F_{11},F_{12}}(1,1))
=
F_7F_8-3F_{10}+2
=110>0 .
\]

Assume $k\ge6$.
Since $F_{j}=F_{j-1}+F_{j-2}<2F_{j-1}$ for $j>3$, we obtain 
$F_{2k}
<
4F_{2k-3}$.
Hence
\begin{align*}
F_{2k-3}F_{2k-2}-3F_{2k}+2
&>
F_{2k-3}F_{2k-2}-12F_{2k-3}+2 \\
&=
(F_{2k-2}-12)F_{2k-3}+2 .
\end{align*}

Since $k\ge6$, we have $F_{2k-2}\ge F_{10}=55$, and therefore $(F_{2k-2}-12)F_{2k-3}+2>0$. 
This proves the claim.
\end{proof}

\begin{exm}
If $k=5$, then we have
\[
d(\Sigma(F_{11},F_{12},F_{13}))
=
F_{F_{11},F_{12}}(3,4)
=
90
>
88
=
F_{11}-1.
\]
\end{exm}

\begin{rmk}
Assume that $F_{2k+1}$ is odd and $k<5$. 
Then $k=2$ or $3$. 
In particular, $l_{F_{2k+1},F_{2k+2}} \le F_6 = 8$.
Therefore, by Theorem~\ref{thm(1-19)}, we obtain
\[
d(\Sigma(F_{5},F_{6},F_{7}))=F_{5}-1=4
\quad\text{and}\quad
d(\Sigma(F_{7},F_{8},F_{9}))=F_{7}-1=12.
\]
\end{rmk}

\begin{cor}
\label{cor(Fibonacci-cases)}
Suppose that $F_{2k+1}$ is odd and $k>4$. 
Then the Brieskorn homology $3$-sphere $\Sigma(F_{2k+1},F_{2k+2},F_{2k+3})$ is homology cobordant to neither
\[
\Sigma\!\left(F_{2k+1},\frac{3F_{2k+1}-1}{2},3F_{2k+1}+2\right)
\quad\text{nor}\quad
\Sigma(F_{2k+1},2F_{2k+1}-1,2F_{2k+1}+1). 
\]
\end{cor}

\begin{proof}
We compute $s_{F_{2k+1},(3F_{2k+1}-1)/2}=1$ and $s_{F_{2k+1},\,2F_{2k+1}-1}=2$. 
By Theorem~\ref{thm(Example[KS20])}~(3) and~(1), we obtain
\begin{align*}
d(\Sigma(F_{2k+1},(3F_{2k+1}-1)/2,3F_{2k+1}+2))
&= d(\Sigma(F_{2k+1}, 2F_{2k+1}-1,2F_{2k+1}+1))\\
&= F_{2k+1}-1
 = D(F_{2k+1},F_{2k+2},F_{2k+3}).
\end{align*}
On the other hand, Proposition~\ref{prop(Fibonacci-case)} shows that
\[
d(\Sigma(F_{2k+1},F_{2k+2},F_{2k+3}))
>
D(F_{2k+1},F_{2k+2},F_{2k+3}).
\]
Therefore, the $d$-invariant of 
$\Sigma(F_{2k+1},F_{2k+2},F_{2k+3})$
differs from those of
\[
\Sigma(F_{2k+1},(3F_{2k+1}-1)/2,3F_{2k+1}+2)
\quad\text{and}\quad
\Sigma(F_{2k+1}, 2F_{2k+1}-1,2F_{2k+1}+1).
\]
Hence, Remark~\ref{rmk(notcobordant)} implies the claim.
\end{proof}

\begin{cor}
Let $\mathcal{S}
=
\left\{
\Sigma(F_{2n+1},F_{2n+2},F_{2n+3})
\;\middle|\;
n \ge 2 \ \text{and} \ F_{2n+1}\ \text{is odd}
\right\}$. 
Then $\Theta_{\mathbb{Z}}^3$ contains the subgroup 
\[
\bigoplus_{\substack{n \ge 2 \\ F_{2n+1}\ \text{is odd}}}
\mathbb{Z}
\big[
\Sigma(F_{2n+1},F_{2n+2},F_{2n+3})
\big]_{\sim_{\mathbb{Z}}}. 
\]
\end{cor}

\begin{proof}
The argument is identical to that of Corollary~\ref{cor(D=d-partI)}.
\end{proof}

\section{Application to the knot concordance group}
\label{sec(application-knot-conc)}
In this section, we apply our results to the concordance subgroup $\mathcal{C}_{\mathrm{TS}}$. 

Referring to Sections~\ref{sec(infinite-kinds-of-infinite-classes)} and~\ref{sec(d-neq-D)}, Theorem~\ref{thm([End95]reinterpretation)} implies the following.

\begin{thm}
\label{thm(knot-d=D=p-1)}
Let $k$ and $l$ be positive integers. 
Define
\[
(p'_{k,l}, q'_{k,l}, r'_{k,l}) = (4kl+1, 2(2k+1)l+1, (2k+1)(4kl+1)+2k).
\]
For each fixed positive integer $l$, then $\mathcal{C}_{\mathrm{TS}}$ contains the subgroup 
\[
\bigoplus_{k=1}^{\infty}
\mathbb{Z}[P(-p'_{k,l},q'_{k,l},r'_{k,l})],
\]
and for each fixed positive integer $k$, then $\mathcal{C}_{\mathrm{TS}}$ contains the subgroup 
\[
\bigoplus_{l=1}^{\infty}
\mathbb{Z}[P(-p'_{k,l},q'_{k,l},r'_{k,l})].
\]
\end{thm}

\begin{proof}
By definition, $p'_{k,l}$, $q'_{k,l}$, and $r'_{k,l}$ are all odd integers.
A direct computation shows that $1<p'_{k,l}<q'_{k,l}<r'_{k,l}$ and $
p'_{k,l}q'_{k,l}
+ p'_{k,l}r'_{k,l}
- q'_{k,l}r'_{k,l}
= 1$. 
Moreover, for any positive integer $k$, then 
$
p'_{k,l}q'_{k,l}r'_{k,l}
<
p'_{k+1,l}q'_{k+1,l}r'_{k+1,l}$. 
Similarly, for any positive integer $l$, then 
$
p'_{k,l}q'_{k,l}r'_{k,l}
<
p'_{k,l+1}q'_{k,l+1}r'_{k,l+1}
$. 
Therefore, the conclusion follows from
Theorem~\ref{thm([End95]reinterpretation)}.
\end{proof}

\begin{rmk}
If $(k_1,l_1)\neq (k_2,l_2)$, then $(p'_{k_1,l_1},q'_{k_1,l_1}) \neq (p'_{k_2,l_2},q'_{k_2,l_2})$.
Since there are infinitely many choices of either $k$ or $l$,
Theorem~\ref{thm(knot-d=D=p-1)} produces infinitely many explicit
$\mathbb{Z}^{\infty}$-subgroups of $\mathcal{C}_{\mathrm{TS}}$
satisfying
\[
d(\Sigma(p,q,r)) = D(p,q,r)
\quad \text{and} \quad
s_{p,q}\in(0,1).
\]
\end{rmk}

\begin{thm}
\label{thm(knot-d=D>p-1)}
Let $u$ and $v$ be positive integers. 
Define
\[
(p'_{u,v}, q'_{u,v}, r'_{u,v}) = (4u(u+1)v-2u-1, 4u(2u+1)v-4u-1, 4(u+1)(2u+1)v-4u-3). 
\]
For each fixed positive integer $v$, then $\mathcal{C}_{\mathrm{TS}}$ contains the subgroup 
\[
\bigoplus_{u=1}^{\infty}
\mathbb{Z}[P(-p'_{u,v},q'_{u,v},r'_{u,v})], 
\]
and for each fixed positive integer $u$, then $\mathcal{C}_{\mathrm{TS}}$ contains the subgroup 
\[
\bigoplus_{v=1}^{\infty}
\mathbb{Z}[P(-p'_{u,v},q'_{u,v},r'_{u,v})]. 
\]
\end{thm}

\begin{proof}
The argument is identical to that Theorem \ref{thm(knot-d=D=p-1)}. 
\end{proof}

\begin{rmk}
If $(u_1,v_1)\neq (u_2,v_2)$, then $(p'_{u_1,v_1},q'_{u_1,v_1}) \neq (p'_{u_2,v_2},q'_{u_2,v_2})$.
Since there are infinitely many choices of either $u$ or $v$,
Theorem~\ref{thm(knot-d=D>p-1)} produces infinitely many explicit
$\mathbb{Z}^{\infty}$-subgroups of $\mathcal{C}_{\mathrm{TS}}$
satisfying
\[
d(\Sigma(p,q,r)) = D(p,q,r)
\quad \text{and} \quad
s_{p,q}\in(1,2).
\]
\end{rmk}

\begin{thm}
\label{thm(knot-d>D)}
Let $t$ be a positive integer and $k$ a nonnegative integer. \\
Define $(p'_{t,k,1}, q'_{t,k,1}, r'_{t,k,1}) = (4t(2t+1)(2k+1)+4t+1, (2t+1)(6t+1)(2k+1)+6t+2, 4t(6t+1)(2k+1)+12t+1)$ and $(p'_{t,k,2}, q'_{t,k,2}, r'_{t,k,2}) = (2t(2t-1)k+4t-1, 2t(3t-1)k+6t-1, 2(2t-1)(3t-1)k+12t-5)$. 
For each fixed positive integer $t$, then $\mathcal{C}_{\mathrm{TS}}$ contains the subgroup 
\[
\bigoplus_{k=0}^{\infty}
\mathbb{Z}[P(-p'_{t,k,i},q'_{t,k,i},r'_{t,k,i})], 
\]
and for each fixed nonnegative integer $k$, then $\mathcal{C}_{\mathrm{TS}}$ contains the subgroup 
\[
\bigoplus_{t=1}^{\infty}
\mathbb{Z}[P(-p'_{t,k,i},q'_{t,k,i},r'_{t,k,i})], 
\]
for each $i\in\{1,2\}$.
\end{thm}

\begin{proof}
The argument is identical to that of Theorem~\ref{thm(knot-d=D=p-1)}.
\end{proof}

\begin{rmk}
If $(t_1,k_1,i_1)\neq(t_2,k_2,i_2)$, then $(p'_{t_1,k_1,i_1}, q'_{t_1,k_1,i_1})\neq(p'_{t_2,k_2,i_2}, q'_{t_2,k_2,i_2})$.
Since there are infinitely many choices of either $t$ or $k$,
Theorem~\ref{thm(knot-d>D)} produces infinitely many explicit $\mathbb{Z}^{\infty}$-subgroups of $\mathcal{C}_{\mathrm{TS}}$ satisfying
\[
d(\Sigma(p,q,r)) > D(p,q,r)
\quad \text{and} \quad
s_{p,q}\in(1,2). 
\]
\end{rmk}

The results obtained in this section naturally lead to the following questions.

\begin{que}
Does there exist an infinite sequence
$((p_{n,k},q_{n,k},r_{n,k}))_{n,k=1}^{\infty}$
and a submodule
$A$ of $\mathcal{C}_{\mathrm{TS}}$
such that
\[
\mathcal{C}_{\mathrm{TS}}
=
A \oplus
\bigoplus_{n=1}^{\infty}
\bigoplus_{k=1}^{\infty}
\mathbb{Z}[P(-p_{n,k},q_{n,k},r_{n,k})], \quad \text{and}
\]
\[
d(\Sigma(p_{n,k_1},q_{n,k_1},r_{n,k_1}))
=
d(\Sigma(p_{n,k_2},q_{n,k_2},r_{n,k_2}))
\]
for all positive integers $k_1$ and $k_2$?
\end{que}

\begin{que}
Does there exist an infinite sequence $((p_n,q_n,r_n))_{n=1}^{\infty}$ such that \\
$(P(-p_n,q_n,r_n))_{n=1}^{\infty}$ generates a $\mathbb{Z}^{\infty}$-summand of
$\mathcal{C}_{\mathrm{TS}}$ and
\[
d(\Sigma(p_n,q_n,r_n))
=
D(p_n,q_n,r_n)
>
p_n-1 \quad \text{for all $n$?}
\]
\end{que}

\begin{que}
Does there exist an infinite sequence $((p_n,q_n,r_n))_{n=1}^{\infty}$ such that \\
$(P(-p_n,q_n,r_n))_{n=1}^{\infty}$ generates a $\mathbb{Z}^{\infty}$-summand of
$\mathcal{C}_{\mathrm{TS}}$ and 
\[
d(\Sigma(p_n,q_n,r_n))
>
D(p_n,q_n,r_n) \quad \text{for all $n$?}
\]
\end{que}

\bibliographystyle{amsalpha}
\bibliography{references.bib}

\end{document}